\title{\bf{The Splitting Lemma in any Characteristic}}
\author{
       \bf{Gert-Martin Greuel and Gerhard Pfister}\\
 }
\date{}

\documentclass[12pt]{article}
\usepackage{amsmath, amsthm, amssymb, mathrsfs, amscd, amsthm, wasysym, amscd,amsfonts,graphicx}
\usepackage{indentfirst,url,xypic, xcolor}
\usepackage{lipsum} 
\usepackage[all]{xy}
\usepackage{url}
\usepackage[colorlinks,plainpages]{hyperref}
\usepackage{verbatim}
\usepackage{fancyvrb}
\usepackage{listings}

\lstset{
	basicstyle=\ttfamily,
	mathescape
} 
\usepackage[colorlinks,plainpages]{hyperref}
\setlength{\parindent}{15pt}
\hypersetup{
	colorlinks=true,
	linkcolor=blue,
	filecolor=magenta,      
	urlcolor=cyan,
	citecolor=blue
}

\DeclareMathOperator{\rank}{rank}

\DeclareMathOperator{\ord}{ord}

\DeclareMathOperator{\rsim}{\stackrel{r}{\sim}}

\newtheorem{Definition}{ Definition}[section]
\newtheorem{Theorem}[Definition]{Theorem}
\newtheorem{Remark}[Definition]{Remark}

\newtheorem{Corollary}[Definition]{Corollary}

\newtheorem{Lemma}[Definition]{Lemma}

\newcommand{\N}{{\mathbb N}}

\newcommand{\Z}{{\mathbb Z}}
\newcommand{\R}{{\mathbb R}}
\newcommand{\C}{{\mathbb C}}
\newcommand{\Q}{{\mathbb Q}}


\newcommand{\fm}{\mathfrak{m}}


\newcommand{\bo}{\boldsymbol{0}}

\newcommand{\bu}{\boldsymbol{u}}
\newcommand{\bv}{\boldsymbol{v}}

\newcommand{\bx}{\boldsymbol{x}}

\newcommand{\beps}{\boldsymbol{\varepsilon}}
\newcommand{\eps}{\varepsilon}
\newcommand{\balpha}{\boldsymbol{\alpha}}

\def\<bx{\langle \bx \rangle}

\hoffset= -0.4 in
\voffset= -0.4 in
\textwidth=426pt

\begin{document}
\maketitle
\section*{Abstract}
We give a simple proof of the splitting lemma in singularity theory, also known as generalized Morse lemma, for formal power series over arbitrary fields. Our proof for the uniqueness of the residual part in any characteristic is new and was previously unknown in characteristic two. Beyond the formal case, we give proofs for algebraic power series and for convergent real and complex analytic power series, which are new for non-isolated singularities.

\section{Introduction} \label{sec:1}
The splitting lemma is of great importance for the classification of singularities. It says that a power series $f$ 
splits (up to a coordinate change) as the sum of a quadratic part $f^{(2)}$ plus a series $g$ of order at least three, where $f^{(2)}$ and $g$ depend on disjoint sets of variables.  
Moreover, the series $g$, the "residual part", is unique up to a  change of coordinates. 
 The splitting lemma is well kown
for  power series or  differentiable functions $f$ of order at least two 
over the real or complex numbers. 
This result, for differentiable functions, is due to René Thom, who used it for his classification of the 7 elementary catastrophes (see his book \cite{Th75}, § 5.2D). Since then it has become an indispensable tool in singularity theory, in particular for the classification of singularities. Thom's proof, especially for the uniqueness of the residual part, is fairly long and complicated and depends on a determinacy theorem of Tougeron. 
A proof for convergent power series over the complex numbers can be found in \cite{GLS07} and \cite{dJP00}. A proof  for the existence of the splitting for formal power series over algebraically closed fields of positive characteristic is sketched in  \cite{GK90}. 

In this note we give a very simple proof of the splitting lemma (existence and uniqueness of the residual part) for formal power series over an arbitrary field.  To prove the uniqueness of the residual part we use only the implicit function theorem and some tricky substitutions, in particular in characteristic two.

The existing of a splitting beyond the formal case is more involved and requires different methods. We prove it for algebraic power series over a 
real valued field (using the nested Artin
approximation theorem) and for convergent complex and real analytic power series (using the existence of a versal unfolding).

We like to emphasize that  we do not assume that the power series has an isolated singularity, as it was supposed in the classical complex analytic case. 
\medskip

In order to treat  convergent and formal power series at the same time, let  $K$ be a field of arbitrary characteristic together with a real valuation $| \ | :K \to \R_{\ge 0}$\footnote{This means $ |a| = 0 \Leftrightarrow a=0$, $ |a||b| = |ab|$, and $|a+b| \le |a|+|b|$.}. $K$ is then called a {\em  real valued field}. If  $K$  is complete w.r.t. $| \ |$ (every Cauchy sequence with respect to $| \ |$ converges in $K$) $K$ is called a {\em complete real valued field}. 
$K$ is called {\em quasi-complete} if the completion $\bar K$  of $K$ is a separable field extension of $K$. Note that in characteristic 0 every real valued field is already quasi-complete.

Examples of a complete real valued field are $\C$  
resp. $\R$ with the usual absolute value as valuation or any field $K$ with the trivial valuation ($|0| =0, |a| =1$ if $a\ne 0$). Finite fields permit only the trivial valuation.
The absolute value on the field of rational numbers makes $\Q$ a real valued field, which is not complete.
Another example is given by the field of $p$-adic numbers.
Let \mbox{$p$} be a prime number, then the map
$$ v:\Z\setminus\{0\} \to \R_{>0}\,,\quad a \mapsto p^{-m} \text{ with }
m:=\max\{k\in \N  \mid p^k \text{ divides }a\} $$
extends to a unique real valuation of $\Q$. With this
valuation, $\Q$ is a real valued field that is not complete.
 The completion of $\Q$ with resp. to  the valuation $v$ is called the
field of {\em $p$-adic numbers}.
\medskip

Let $K$ be any real valued field, not necessarily complete.  For each \mbox{${\beps}\in
  (\R_{>0})^n$}, we
define a map 
$\|\phantom{f}\|_{\beps}:K[[x_1,\dots,x_n]]\to \R_{>0}\cup  \{\infty\}$
   by setting for \mbox{$f  =  \sum_{{\balpha}\in\N^n}c_{\balpha} {\bx}^{\balpha} $}
$$\|f\|_{\beps}:= \sum_{{\balpha} \in \N^n} |c_{\balpha}|\cdot
{\beps}^{{\balpha}}\in \R_{\ge 0}\cup \{\infty\}\,.
$$
Note that \mbox{$\|\phantom{f}\|_{\beps}$} is a norm on the set of all power series $f$ with  \mbox{$\|f\|_{\beps}<\infty$}. 
A formal power series
\mbox{$f  =  \sum_{{\balpha}\in\N^n}c_{\balpha} {\bx}^{\balpha} $}
is called {\em
convergent}
iff there exists a real vector \mbox{$\beps\in(\R_{>0})^n$}
such that \mbox{$ \| f \|_{\beps} < \infty $}.
We denote by 
$$K\langle \bx\rangle = K\langle x_1,...,x_n \rangle$$
the {\em convergent power series ring} over $K$. We denote by $f(0)$ the constant term $c_{\bo}$.

$K\langle \bx\rangle$ is a Noetherian, integral and factorial local ring  with maximal ideal $\fm =  \langle x_1,...,x_n \rangle$ (see \cite[Section I.1.2]{GLS07}; the assumption that $K$ is complete w.r.t. the real valuation is not necessary, see Remark \ref{rm.alg}). 

If $K$ is $\C$ resp. $\R$ or $\Q$ (with absolute value as valuation) then $K\langle \bx\rangle $ is the usual convergent power series ring  $\C\{\bx\}$  resp. $\R\{\bx\}$ or $\Q\{\bx\}$. If $K$ is any field with the trivial valuation then $K\langle \bx\rangle$ is the formal power series ring $K[[ \bx ]]$ (cf. \cite[Section 1.1]{GLS07}  or \cite[Kapitel I]{GR71} for further details).

We mention in passing, that if the valuation $| \ |$ on $K$ is  complete, then any convergent power series $f \in K\<bx$, $\|f\|_{\beps}< \infty$, defines a continuous (even analytic) function $f: B_\eps \to K$, with $B_\eps: =
\{(a_1,...,a_n) \in K^n \mid |a_i| < \eps_i, i =1,...,n\}$ (see \cite[Bemerkung 1.1]{GR71}). This is not the case if $K$ is not complete.  However, this aspect  is not relevant for us.\\

  Two power series $f$ and $g$ in $K\langle \bx\rangle$ are called {\em right equivalent} ($f\rsim g$) if there exists a local automorphism $\varphi$ of $K\langle \bx\rangle$   such that $\varphi(f) = g$. Recall that $\varphi$ is determined by $\varphi(x_i) = \varphi_i (x_1,...,x_n) \in K\langle \bx\rangle $ for $i = 1,...,n$, and that $\det (\frac {\partial \varphi_i}{\partial x_j})$ is a unit in  $K\langle \bx\rangle$ (i.e., $\det (\frac {\partial \varphi_i}{\partial x_j} (0)) \ne0$).
  We denote  by
$$H(f):=\Big( \frac{\partial^2 f}{\partial x_i\partial x_j}(0)\Big)_{i,j=1,\ldots,n}\in \mathrm{Mat}(n\times n, K)$$
the {\em Hessian matrix} of $f$ at $0$. If $f \in \fm^2$, then the rank of $H(f)$ is invariant under right equivalence. Moreover, if 
$f^{(2)}$ denotes the 2-jet\,\footnote{\,The $k$-jet $f^{(k)}$ is the image of $f$ in $K\<bx /\fm^{k+1}$, identified with the power series terms up to (including) degree k.}  of $f$ then $H(f) = H(f^{(2)})$.  
Note that $f^{(2)}$ is a {\em quadratic form} over $K$, that is, 
 a homogeneous polynomial of degree 2 in $K[\bx]=K[x_1,...,x_n]$,
$$q(x_1,...,x_n) = \sum_{1\le i , j\le n} a_{ij}x_ix_j, \ a_{ij} \in K,$$
 and the classification of quadratic forms is the first step in the proof of the splitting lemma. 
 
 We recall some elementary facts about quadratic forms.
 Two quadratic forms $q_1$ and $q_2$ are {\em equivalent } ($q_1\sim q_2$) if  there exists a linear automorphism (a linear change of coordinates) $\varphi$ of $K[\bx]$ such that $q_2=\varphi(q_1)$.  $\varphi$  is defined by the linear forms $\varphi (x_i)= \varphi_i (x_1,...,x_n)$,  $i=1,...,n$, and then we have 
$\varphi(q) = \sum a_{ij}\varphi_i \varphi_j$.
Equivalently, we may consider the linear isomorphism $\bx \mapsto C\bx$ of $K^n$ with $C\in GL(n,K)$ an invertible matrix and $\bx$ the column vector $(x_1,...,x_n)$. 
Then 
$\varphi_i (\bx) = \sum_{i=1,...,n}c_i x_i$, where $(c_1,...,c_n)$ is the $i$-th column vector of $C$ and we have $\varphi(q)(\bx) = q(C\bx)$.
The quadratic form
$q$ can be written in matrix form as $q(\bx)= \bx^T A \bx$ (where $^T$ means transpose) with $A=(a_{ij})$  the {\em coefficient matrix} of $q$. We have 
$q(C\bx) = \bx^T (C^T A C) \bx$, that is, the coefficient matrix of $\varphi(q)$ is $ C^T A C$. 
It follows that the rank of the coefficient matrix $A$ is independent of the chosen basis and therefore an invariant of the quadratic form $q$. 

If char$(K)\ne 2$, one may replace the  coefficient matrix $A$ by the symmetric matrix $\frac{1}{2}(A + A^T)$ with the same quadratic form. Therefore one usually assumes in characteristic $\ne 2$ that $A$ is symmetric, that is, $a_{ij}=a_{ji}$. This is however not possible if char$(K) = 2$, and  the classification of quadratic  forms in characteristic $2$  differs substantially from that in characteristic $\ne 2$.
\medskip

For the proof of the splitting lemma for convergent power series we use the following version of the implicit function theorem and the Artin approximation theorems, where the assumptions on the field  $K$ differ slightly.

\begin{Theorem}[Implicit Function Theorem for Convergent Power Series]
\label{thm.implicit}
Let $K$ be a field with a real valuation. Let $x=x_1,\ldots,x_n$ and $y=y_1,\ldots,y_m$ and $f=(f_1,\ldots,f_m)\in K\langle x,y\rangle^m$ such that 
$$f(0,0)=0  \text{ and } \det \big ( \frac{\partial f_i}{\partial y_j} (0,0)\big )_{i,j = 1,...,m}\neq 0.$$
Then there exists $y(x)\in K\langle x\rangle^m$ such that
$$f(x,y(x))=0 \text{ and } y(0)=0.$$ 
\end{Theorem}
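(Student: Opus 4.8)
The plan is to split the proof into a purely formal part, valid over any field and producing the solution as an element of $K[[x]]^m$, and an analytic part that upgrades this formal solution to a convergent one by the method of majorants. The reason to proceed in this order, rather than through a Banach fixed-point argument, is that $K$ is \emph{not} assumed complete, so the normed space $\{g : \|g\|_{\beps}<\infty\}$ need not be complete and a contraction-mapping argument is unavailable. The majorant method sidesteps this difficulty entirely: it only estimates the absolute values of coefficients that have already been produced in $K$, and never requires a limit to exist in $K$.

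First I would normalise. Writing $J:=(\partial f_i/\partial y_j(0,0))_{i,j}\in GL(m,K)$, I replace $f$ by $J^{-1}f$; this is a $K$-linear combination of the $f_i$, hence still lies in $K\langle x,y\rangle^m$, has the same zero set, and now satisfies $\partial f_i/\partial y_j(0,0)=\delta_{ij}$. Setting $h:=y-f$, the equation $f(x,y)=0$ becomes the fixed-point equation $y=h(x,y)$, where $h(0,0)=0$ and $\partial h_i/\partial y_j(0,0)=0$; thus $h$ has no constant term and no term linear in $y$ alone. For the formal solution I compare coefficients degree by degree: because $h$ carries no $y$-linear part, the degree-$d$ component of $h(x,y(x))$ depends only on the coefficients of $y$ of degree $<d$ (substituting $y(x)$ with $y(0)=0$ into any surviving monomial $x^a y^b$ of $h$ raises the order by at least $|a|+|b|\ge 2$). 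Hence the iteration $y^{(0)}=0$, $y^{(k+1)}=h(x,y^{(k)})$ stabilises in each degree and determines a unique $y(x)=\sum_{\balpha}c_{\balpha}\bx^{\balpha}\in K[[x]]^m$ with $y(0)=0$ and $f(x,y(x))=0$.

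It remains to prove convergence. Choosing $\rho\in(\R_{>0})^n$ and $\sigma\in(\R_{>0})^m$ with $M_i:=\|h_i\|_{(\rho,\sigma)}<\infty$, every coefficient of $h_i$ is bounded in absolute value by the corresponding coefficient of an explicit dominating series $\Phi_i(X,Y)$ with nonnegative real coefficients, chosen to share the vanishings $\Phi_i(0,0)=0$ and $\partial\Phi_i/\partial Y_j(0,0)=0$. The real fixed-point equation $Y=\Phi(X,Y)$ is then solvable by the classical real-analytic implicit function theorem, giving a convergent $Y(X)=\sum_{\balpha}b_{\balpha}X^{\balpha}$ with $b_{\balpha}\ge 0$. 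An induction on $|\balpha|$, comparing the recursion that defines $c_{\balpha}$ with the sign-free one that defines $b_{\balpha}$, yields $|c_{\balpha}|\le b_{\balpha}$ for all $\balpha$. Consequently $\|y_i\|_{\beps}\le Y_i(\beps)<\infty$ for every real $\beps$ inside the polyradius of convergence of $Y$, so $y(x)\in K\langle x\rangle^m$, as required.

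The main obstacle is the comparison step $|c_{\balpha}|\le b_{\balpha}$: one must build the majorant $\Phi$ so that the recursion it induces dominates, coefficient by coefficient, the $K$-valued recursion for $y(x)$, while simultaneously preserving the structural vanishings $\Phi(0,0)=0$ and $\partial\Phi_i/\partial Y_j(0,0)=0$, so that the real equation $Y=\Phi(X,Y)$ is itself solvable near the origin. Producing a dominating series that respects these vanishings, rather than merely bounding all coefficients crudely, is the delicate bookkeeping at the heart of the argument; everything else is routine.
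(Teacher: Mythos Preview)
Your argument is correct. The formal iteration is set up properly: after normalising the Jacobian to the identity, $h=y-f$ indeed has $h(0,0)=0$ and $\partial h_i/\partial y_j(0,0)=0$, and your order count shows that the degree-$d$ part of $h(x,y(x))$ depends only on the coefficients of $y$ in degrees $<d$, so the recursion is well posed. The majorant step is the classical one; the point you flag---that the dominating series $\Phi$ must be chosen with the same vanishings $\Phi(0,0)=0$, $\partial\Phi_i/\partial Y_j(0,0)=0$ so that $Y=\Phi(X,Y)$ is itself solvable near the origin---is exactly the crux, and once that is arranged the coefficientwise inequality $|c_{\balpha}|\le b_{\balpha}$ follows by induction on $|\balpha|$ from the triangle inequality and multiplicativity of the valuation. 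Your observation that completeness of $K$ is irrelevant, because the coefficients $c_{\balpha}$ are produced algebraically in $K$ and only their absolute values are estimated afterwards, is the essential reason the theorem holds in this generality.

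The paper does not give its own proof: it simply cites Nagata \cite{N62}, who proceeds differently, first establishing the Weierstra{\ss} preparation theorem for $K\langle\bx\rangle$ without the completeness hypothesis and then deducing the implicit function theorem from it. Remark~\ref{rm.alg} explains why completeness is dispensable in that route as well: the Weierstra{\ss} data $u,p$ are first obtained as formal series with coefficients in $K$, and the norm estimates are then checked in the completion $\bar K$---but since the coefficients already lie in $K$, one concludes $u,p\in K\langle\bx\rangle$. So both approaches share the same two-stage architecture (formal construction in $K$, followed by estimates), but yours is more direct, bypassing the Weierstra{\ss} machinery entirely; the paper's cited route, on the other hand, yields WPT as a by-product, which is used elsewhere.
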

A proof of the theorem  is given \cite{N62}[Theorem 45.3].

\begin{Remark} \label{rm.alg}{\em
In \cite[Theorem 45.3]{N62} Nagata proves the Weierstra{\ss} preparation theorem (WPT)  without the  completeness assumption. The implicit function theorem can be easily
derived from the WPT without assuming that $K$ is complete (see, for example, \cite[Proof of Theorem 1.8]{GLS07}), and hence it holds for any real valued field.
Proofs of the WPT for complete real valued fields are given at several places (e.g.  \cite[Satz 2.1.4]{KPR75} or  \cite[Theorem 1.6]{GLS07}). That the completeness assumption for the WPT (as well as for the Weierstra{\ss} division theorem and the Weierstra{\ss} finiteness theorem) is not necessary, can also be seen as follows: Let $K$ be not complete and $\bar K$ the completion of $K$. In loc. cit. the authors show first that the unit $u$ and the Weierstra{\ss} polynomial $p$ that appear in WPT exist as formal power series over $K$. Then, considering $u$ and $p$  as elements of $\bar K [[\bx]]$, they show by estimates that $\|u\|_{\beps}<\infty$ and  $\|p\|_{\beps}<\infty$. But since the coefficients of $u$ and $p$ are in $K$, it follows that $u, p \in K\<bx$.}
\end{Remark}

We will use the following approximation theorems. 

\begin{Theorem}[Artin Approximation]
\label{thm.approx}
Let $K$ be a real valued field. If the characteristic of $K$ is positive, we assume additionally that $K$ is quasi-complete. 
Let $x=x_1,\ldots,x_n$ and $y=y_1,\ldots,y_m$ and $f=(f_1,\ldots,f_k)\in K\langle x,y\rangle^k$. 
Let $\bar y(x)\in K[[x]]^m$
be a formal solution of $f=0$:
$$f(x,\bar y(x))=0.$$
Then there exists for any integer $c>0$ a convergent solution $y_c(x)\in K\langle x\rangle^m$:
$$f(x,y_c(x))=0$$
such that $$\bar y(x)\equiv y_c(x) \text{ mod } \langle x\rangle^c.$$
\end{Theorem}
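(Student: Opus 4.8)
The plan is to deduce the statement from the N\'eron--Popescu desingularization theorem, which gives the conceptually cleanest route in arbitrary characteristic. The key structural fact is that the $\langle x\rangle$-adic completion of $K\langle x\rangle$ is exactly $K[[x]]$, since $K\langle x\rangle/\langle x\rangle^c = K[x]/\langle x\rangle^c = K[[x]]/\langle x\rangle^c$ for every $c$. Thus $\iota\colon K\langle x\rangle\hookrightarrow K[[x]]$ is the completion homomorphism of a Noetherian local ring, hence faithfully flat, and $K\langle x\rangle$ is a Henselian local ring (which can be read off from the Implicit Function Theorem~\ref{thm.implicit}). The decisive preliminary is to prove that $\iota$ is a \emph{regular} homomorphism, i.e.\ flat with geometrically regular fibres. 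In characteristic $0$ the formal fibres are automatically geometrically regular; in positive characteristic this is precisely where the quasi-completeness hypothesis intervenes, since separability of $\bar K/K$ is what prevents the generic fibre from acquiring geometrically non-regular points, and so guarantees regularity of $\iota$ in every case allowed by the statement. I expect this to be the main obstacle of the whole argument.

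Granting regularity, Popescu's theorem expresses $K[[x]]$ as a filtered inductive limit $\varinjlim_\lambda C_\lambda$ of smooth, finitely presented $K\langle x\rangle$-algebras, and the approximation property follows formally from this. To fit the given \emph{analytic} equations into a finitely presented framework I would first invoke the Weierstra{\ss} preparation theorem (available by Remark~\ref{rm.alg}): after a generic linear change of the $y$-variables it reduces solving $f=0$ near $\bar y(x)$ to solving a system $g=(g_1,\ldots,g_r)$ of equations that are \emph{polynomial} in $y$ with coefficients in $K\langle x\rangle$. The formal solution then amounts to a $K\langle x\rangle$-algebra homomorphism $\Phi\colon B\to K[[x]]$ with $B=K\langle x\rangle[Y_1,\ldots,Y_m]/\langle g_1,\ldots,g_r\rangle$ and $\Phi(Y_j)=\bar y_j(x)$. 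Since $B$ is finitely presented and the limit is filtered, $\Phi$ factors as $B\xrightarrow{\psi}C_\lambda\to K[[x]]$ through one of the smooth algebras $C_\lambda$, so it suffices to approximate the structural map $C_\lambda\to K[[x]]$ by a $K\langle x\rangle$-valued point congruent to it modulo $\langle x\rangle^c$.

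This final approximation is elementary and involves no circular use of the theorem being proved. Writing $C_\lambda$, locally around the relevant point, as an \'etale extension of a polynomial ring $K\langle x\rangle[T_1,\ldots,T_d]$, I would first replace the $T$-coordinates of the given $K[[x]]$-point by elements of $K\langle x\rangle$ congruent modulo $\langle x\rangle^c$ (possible by the equality of truncations noted above), and then solve the remaining \'etale equations exactly in $K\langle x\rangle$ by Hensel's lemma, which applies because $K\langle x\rangle$ is Henselian. Composing the resulting $K\langle x\rangle$-point with $\psi$ and reading off the images of the $Y_j$ yields $y_c(x)\in K\langle x\rangle^m$ with $f(x,y_c(x))=0$ and $y_c(x)\equiv\bar y(x)\bmod\langle x\rangle^c$. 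A more elementary but considerably longer alternative, which avoids Popescu altogether and handles the analytic equations directly, is Artin's original induction on the number $n$ of variables $x$, using the Weierstra{\ss} preparation theorem to lower $n$ and the Implicit Function Theorem~\ref{thm.implicit} to settle the non-degenerate step; there the principal difficulty is to carry the prescribed congruence order $c$ correctly through the induction.
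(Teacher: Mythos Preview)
The paper does not prove this theorem: it simply cites Artin~\cite{A68} for characteristic~$0$, Andr\'e~\cite{A75} for complete valued fields in positive characteristic, and Schemmel~\cite{S82} for the quasi-complete case (noting in a footnote that Schemmel shows quasi-completeness is both necessary and sufficient).

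Your proposal therefore goes well beyond what the paper offers, and the Popescu route you sketch is a genuinely different and conceptually clean strategy. The reduction steps---Weierstra{\ss} preparation to pass to polynomial equations in~$y$, factoring the resulting finitely presented algebra through a smooth $K\langle x\rangle$-algebra, and lifting the section via Henselianness of $K\langle x\rangle$---are all sound. But the step you yourself flag as ``the main obstacle,'' namely regularity of $K\langle x\rangle \to K[[x]]$ in positive characteristic, is not just an obstacle: it is essentially the entire content of the theorem. For a Henselian Noetherian local ring, Popescu's theorem makes the approximation property \emph{equivalent} to regularity of the completion map, so an independent verification of regularity is needed, and the one-line appeal to separability of $\bar K/K$ does not supply one. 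In practice the known route to this regularity is to first prove the approximation property directly---which is exactly what the cited papers do (Schemmel's paper predates Popescu's)---so your Popescu argument, as it stands, risks being circular. Your second alternative, Artin's induction on the number of $x$-variables via Weierstra{\ss} preparation and Theorem~\ref{thm.implicit}, is precisely the method of the papers \cite{A68}, \cite{A75}, \cite{S82} that the authors invoke.
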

The Theorem was proved in characteristic $0$ by M. Artin \cite{A68}, in characteristic $p>0$ for complete valued fields
by M. Andr\'e \cite{A75} and for quasi-complete fields by K.-P. Schemmel \cite{S82}.\footnote{Schemmel proved that the property of $K$ being quasi-complete is necessary and sufficient for the Approximation Theorem to be true.}

For any field $K$ let $K\<bx^{alg}  \subset K[[\bx]]$,
$\bx = (x_1,...,x_n)$, denote the ring of {\em algebraic power series}.\footnote{An algebraic power series is a formal power series $f(\bx)$ for which a non-zero polynomial $P(\bx, t)$ exists, such that $P(x, f(x)) = 0$ holds.}  

\begin{Lemma} \label{lm.alg}  If $K$ is a real valued field,
then $K\<bx^{alg}  \subset K\<bx$. 
\end{Lemma}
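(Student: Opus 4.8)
My goal is to prove that every algebraic power series is convergent, i.e. that for $f\in K[[\bx]]$ algebraic over $K[\bx]$ there is some $\beps\in(\R_{>0})^n$ with $\|f\|_{\beps}<\infty$. I would begin with two reductions. First, replacing $f$ by $f-f(0)$ changes $f$ only by a constant, which affects neither convergence nor algebraicity (an annihilating polynomial $P(\bx,t)$ is replaced by $P(\bx,t+f(0))$), so I may assume $f(0)=0$. Second, I would remove inseparability: in characteristic $p$ the minimal polynomial of $f$ over $K(\bx)$ has the shape $g(\bx,t^{p^e})$ with $g$ separable, so $h:=f^{p^e}$ is separably algebraic over $K(\bx)$. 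Using the Frobenius identity $h=\sum_{\alpha}c_\alpha^{p^e}\bx^{p^e\alpha}$ together with multiplicativity of the valuation, $|c_\alpha^{p^e}|=|c_\alpha|^{p^e}$, a direct estimate shows that convergence of $h$ forces convergence of $f$ (if $\|h\|_{\beps}\le M$ then $|c_\alpha|\le M^{1/p^e}\beps^{-\alpha}$, whence $\|f\|_{\beps/2}<\infty$). Thus it suffices to treat the separable case.

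For separable $f$ with $f(0)=0$ I would bring in the Artin--Mazur lemma, which presents $f$ inside an \emph{\'etale} system: there are auxiliary series $\by=(y_1,\dots,y_N)\in(\fm K[[\bx]])^N$, with $f$ a coordinate (or a polynomial combination) of $\by$, and polynomials $\Phi_1,\dots,\Phi_N\in K[\bx,y_1,\dots,y_N]$ such that $\Phi_i(\bx,\by(\bx))=0$, $\Phi_i(0,0)=0$, and $\det\big(\frac{\partial\Phi_i}{\partial y_j}(0,0)\big)\ne 0$. The role of the extra variables is exactly to achieve a nonsingular Jacobian at the origin even when the single hypersurface $\{P=0\}$ is singular there (that is, when $\partial_t P(0,0)=0$, which is the generic situation and the reason a naive application of the implicit function theorem to $P$ alone fails).

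With this system in hand the conclusion is immediate from the Implicit Function Theorem (Theorem~\ref{thm.implicit}): its hypothesis is precisely $\det\big(\frac{\partial\Phi_i}{\partial y_j}(0,0)\big)\ne 0$, so $\Phi=0$ has a convergent solution $\tilde\by\in K\langle\bx\rangle^N$ with $\tilde\by(0)=0$. Finally I would note that the formal solution of $\Phi=0$ with value $0$ at the origin is unique --- the invertible Jacobian lets one solve for the homogeneous parts of $\by$ degree by degree --- so $\tilde\by$ coincides with the given formal solution $\by$. Hence $\by$, and with it $f$, is convergent.

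The main obstacle is the construction of the Artin--Mazur presentation in the second paragraph, i.e. manufacturing the auxiliary series and polynomials so that the Jacobian is a unit at the origin. This step is where separability is indispensable --- an \'etale system can only represent separable elements --- which is precisely why the inseparable case has to be stripped off first. Everything downstream is a direct invocation of Theorem~\ref{thm.implicit} together with the elementary uniqueness of the formal solution.
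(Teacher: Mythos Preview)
Your argument is correct but takes a genuinely different route from the paper. The paper applies Artin approximation (Theorem~\ref{thm.approx}) directly to the single polynomial relation $F(f)=0$: for each $c$ it produces a convergent $f_c\in K\langle\bx\rangle$ with $F(f_c)=0$ and $f_c\equiv f\bmod\fm^c$, and since $F$ has only finitely many roots in $K[[\bx]]$ one concludes $f=f_c$ for $c$ large; a non-complete $K$ is handled by first passing to the completion $\bar K$ and then intersecting back with $K[[\bx]]$. You instead bypass Artin approximation entirely: after stripping off inseparability via Frobenius you invoke the Artin--Mazur presentation of a separable algebraic power series as a component of the solution to an \'etale polynomial system, and then Theorem~\ref{thm.implicit} furnishes a convergent solution which, by uniqueness of the formal solution of an \'etale system, must coincide with $f$. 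Your route is arguably more elementary in that it avoids the approximation machinery and works uniformly for every real valued $K$ without the detour through the completion; the price is importing the Artin--Mazur lemma, which is not stated in the paper and whose construction (as you yourself acknowledge) is where the actual work lies.
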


\begin{proof} This can easily be seen using Theorem \ref{thm.approx} as follows. 
Assume first that $K$ is complete.
Let $f\in K\langle x_1,\ldots,x_n\rangle=:S\subset K[[x_1,\ldots,x_n]]$ be an algebraic power series and $F\in S[y]$ a polynomial such that $F(f)=0.$  Theorem \ref{thm.approx} implies that for every $c>0$ there exists $f_c\in  S$ such that $F(f_c)=0$ and
   $f\equiv f_c \text{ mod }\frak{m}^c$. Since F has only finitely many zeros we have
   $f=f_c$ for some $c$ (this follows from the Weierstra{\ss} division theorem, \cite[Theorem I.1.8]{GLS07}).
   If $K$ is not complete we can pass to its completion $\bar K$ and obtain that $f\in \bar K\langle x_1,\ldots x_n\rangle \cap K[[x_1,\ldots,x_n]]=K\langle x_1,\ldots x_n\rangle$.
\end{proof} 
   
To prove the splitting lemma for algebraic power series we use the following nested approximation theorem proved by D. Popescu \cite{P86}. 
\begin{Theorem}[Nested Artin Approximation]\footnote{Note that a similar theorem in the analytic case is wrong, see \cite{G71}. If $s_i=n$ for all $i$, the Theorem was proved by M. Artin \cite{A69}.}
\label{thm.nestapprox}
Let $K$ be any field. 
Let $x=x_1,\ldots,x_n$ and $y=y_1,\ldots,y_m$ and $f=(f_1,\ldots,f_k)\in (K\langle x,y\rangle^{alg})^k$. Let $\bar y(x)=(\bar y_1(x),\ldots,\bar y_m(x)),$ $ \bar y_i(x) \in K[[x_1,\ldots,x_{s_i}]]$ for some $s_i\leq n$,
be a formal solution of $f=0$:
$$f(x,\bar y(x))=0.$$
Then there exists for any integer $c>0$ an algebraic solution $y_c(x)=(y_{c,1}(x),\ldots,y_{c,n}(x)),$  $y_{c,i}\in K\langle x_1,\ldots,x_{s_i}\rangle^{alg}$:
$$f(x,y_c(x))=0$$
such that $$\bar y(x)\equiv y_c(x) \text{ mod } \langle x\rangle^c.$$
\end{Theorem}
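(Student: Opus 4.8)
The plan is to deduce the statement from Popescu's General N\'eron Desingularization theorem, which is the engine behind all approximation results of this type. Recall its content: a homomorphism $A\to B$ of Noetherian rings is regular if and only if $B$ is a filtered inductive limit of smooth finite-type $A$-algebras. The base ring here is $A:=K\langle x_1,\ldots,x_n\rangle^{\mathrm{alg}}$, which is Noetherian, excellent and Henselian, and the key structural input is that the completion map $A\to K[[x_1,\ldots,x_n]]$ is regular. A formal solution $\bar y(x)$ of $f=0$ is the same datum as an $A$-algebra homomorphism $\psi\colon A[y]/(f)\to K[[x]]$, and the whole problem is to approximate $\psi$ by an $A$-algebra homomorphism landing in $A$ itself, subject to the nesting constraints.

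First, as a warm-up, I would treat the ordinary (non-nested) approximation of Theorem \ref{thm.approx} in the algebraic setting, i.e.\ forgetting the constraints $s_i\le n$. By regularity of $A\to K[[x]]$ I may write $K[[x]]=\varinjlim_\lambda C_\lambda$ with each $C_\lambda$ a smooth $A$-algebra, and since $A[y]/(f)$ is of finite type the homomorphism $\psi$ factors through some $C_\lambda$. Smoothness of $C_\lambda$ over the Henselian ring $A$ (via the infinitesimal lifting criterion) together with the $\langle x\rangle$-adic Henselian property lets one promote the $K[[x]]$-point of $C_\lambda$, reduced modulo $\langle x\rangle^{c}$, to an honest $A$-point congruent to $\bar y$ modulo $\langle x\rangle^{c}$; its image in $A$ is the desired algebraic solution $y_c$.

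To obtain the nested statement I would argue by induction on $n$, peeling off the outermost variable $x_n$. Split the unknowns into $y'$, those with $s_i<n$, which must remain functions of $x_1,\ldots,x_{n-1}$ only, and $y''$, those with $s_i=n$, which are allowed to depend on all variables. Regarding $B:=K\langle x_1,\ldots,x_{n-1}\rangle^{\mathrm{alg}}$ as the new base and $x_n$ as a single relative variable, I would apply N\'eron desingularization relatively to the tower $B\hookrightarrow A$, so that the smooth factorization and the subsequent lifting are performed compatibly with the subring $B$. The nested components $y'$ are then approximated within $B$ by the inductive hypothesis, the components $y''$ are approximated over $A$, and the congruence modulo $\langle x\rangle^{c}$ is maintained throughout.

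The hard part will be exactly the preservation of the nesting: one must desingularize and lift without ever letting a component $y'_i$ acquire a dependence on variables $x_j$ with $j>s_i$. This is where the argument must use the full algebraic structure --- excellence and the Henselian property of $K\langle x\rangle^{\mathrm{alg}}$, together with the compatibility of N\'eron desingularization with a chain of such rings --- rather than any soft or purely analytic reasoning. Indeed, as the footnote records (see \cite{G71}), the analogous nested statement for convergent power series is false, so no proof can avoid genuinely exploiting the algebraicity; carrying out this relative desingularization compatibly with the nested tower is precisely Popescu's contribution in \cite{P86}.
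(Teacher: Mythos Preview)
The paper does not prove this theorem at all; it is merely stated, with attribution to Popescu \cite{P86}, and then used as a black box in the proofs of Theorems \ref{thm.asplitne2} and \ref{thm.split2}. There is therefore no ``paper's own proof'' to compare your proposal against.

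That said, your sketch is in the right spirit. General N\'eron Desingularization is indeed the engine, and the non-nested warm-up you describe is exactly the standard derivation of algebraic Artin approximation from Popescu's theorem: regularity of $A\to \widehat A$ for an excellent local ring $A$ gives $\widehat A$ as a filtered colimit of smooth $A$-algebras, the finite-type algebra $A[y]/(f)$ factors through one of them, and Henselianity of $A$ lets you lift a section congruent to the given one modulo $\langle x\rangle^{c}$. The inductive scheme you propose for the nested case---peeling off one variable at a time and working over the smaller base $B=K\langle x_1,\ldots,x_{n-1}\rangle^{\mathrm{alg}}$---is also essentially the correct strategy. You are right that the entire difficulty is in preserving the nesting through the desingularization and lifting steps, and you honestly flag this as ``the hard part'' and defer to \cite{P86}. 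That is appropriate: filling in this step is genuinely the content of Popescu's work (with later clarifications by Spivakovsky, Swan, and others) and is not something one can write out in a paragraph. So your proposal is a correct outline of where the theorem comes from, but it is an outline and a citation rather than a proof---which, as it happens, is exactly what the paper itself provides.
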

\bigskip

   We apply now Artin's approximation theorem to  formulate some results that are of interest in its own  (and that are used below) for arbitrary $K\<bx$. For a non-negative integer $k$ we call $f$ {\em right $k$-determined}, if for any $g \in K\langle \bx \rangle$ with $f-g \in \fm^{k+1}$, we have  $f\sim_r g$. Moreover, we call
$$\mu(f) := \dim_K K\langle \bx \rangle / \langle \frac{\partial f}{\partial x_1},..., \frac{\partial f}{\partial x_n}\rangle$$
the {\em Milnor number} of $f$. 
If $f \in \fm^l \setminus  \fm^{l+1}$ the integer ord$(f):=l$ is called the {\em order} of $f$.

\begin{Theorem}\label{thm.deter} Let $K$ be a real valued field and   quasi-complete if char$(K)>0$.
 Let $f \in \fm^2 \subset K\langle \bx \rangle$. 
 If $\fm^{k+2}\subset\fm^2\cdot \langle \frac{\partial f}{\partial x_1},..., \frac{\partial f}{\partial x_n}\rangle$, then $f$ is
       right $(2k-\ord(f)+2)$-determined. 
\end{Theorem}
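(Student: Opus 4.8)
The plan is to reduce the statement to a purely formal assertion that I can establish by a characteristic-free iteration, and then to pass from the formal to the convergent setting by Artin approximation (Theorem \ref{thm.approx}); this last passage is precisely why quasi-completeness is imposed when $\characteristic(K)>0$. Write $l:=\ord(f)$, $N:=2k-l+2$, and abbreviate the Jacobian ideal by $j(f):=\langle \partial f/\partial x_1,\dots,\partial f/\partial x_n\rangle$. Unwinding the definition, $f$ is right $N$-determined iff $f\rsim f+h$ for every $h\in\fm^{N+1}$, so this is what I would prove. A preliminary bookkeeping remark, obtained by combining the hypothesis $\fm^{k+2}\subseteq\fm^2 j(f)$ with $j(f)\subseteq\fm^{l-1}$, is that $k\ge l-1$; this keeps all exponents below nonnegative and is used repeatedly.

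Formal step (the heart of the matter). I would construct a formal automorphism $\bar\varphi$ of $K[[\bx]]$ with $f\circ\bar\varphi=f+h$ as an $\fm$-adically convergent infinite composition of elementary coordinate changes that successively raise the order of the error. Concretely, suppose $g=f+h_m$ with $h_m\in\fm^m$ and $m\ge N+1$. Multiplying the hypothesis by $\fm^{m-k-2}$ (legitimate since $m\ge k+2$) gives $\fm^m\subseteq\fm^{m-k}j(f)$, so I can write $h_m=\sum_i a_i\,\partial f/\partial x_i$ with $a_i\in\fm^{m-k}$. As $m-k\ge 2$, the substitution $\psi\colon x_i\mapsto x_i-a_i$ is an automorphism with identity linear part, and the characteristic-free Taylor expansion yields $f(\bx-a)=f-\sum_i a_i\,\partial f/\partial x_i+R=f-h_m+R$. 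The point is to check $R\in\fm^{m+1}$ and $h_m(\bx-a)-h_m\in\fm^{m+1}$, so that $g\circ\psi=f+h_{m+1}$ with $h_{m+1}\in\fm^{m+1}$. Iterating from $m=N+1$ and composing the resulting $\psi$'s (which tend $\fm$-adically to the identity) produces $\bar\varphi$.

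Convergent step. Given the formal $\bar\varphi$ with $f\circ\bar\varphi=f+h$, I would consider $P(\bx,y):=f(y)-f(\bx)-h(\bx)\in K\langle \bx,y\rangle$, a single equation in the $n$ unknowns $y=(y_1,\dots,y_n)$, for which $y=\bar\varphi(\bx)$ is a formal solution. Artin approximation (Theorem \ref{thm.approx}, in its quasi-complete positive-characteristic form) then yields a convergent solution $y=\varphi(\bx)\in K\langle\bx\rangle^n$ with $\varphi\equiv\bar\varphi\bmod\fm^c$ for any prescribed $c$. Choosing $c\ge 2$ keeps the linear part equal to the identity, so $\varphi$ is a genuine local automorphism, and $P(\bx,\varphi)=0$ says exactly $\varphi(f)=f+h$, i.e.\ $f\rsim f+h$.

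Main obstacle. The delicate point is the order estimate $R\in\fm^{m+1}$, which is what pins down the sharp degree $N=2k-l+2$ rather than a cruder value. A naive bound gives only $R\in\fm^{2(m-k)}$, which would force $m\ge 2k+1$; to reach the threshold $m\ge N+1$ one must exploit that the degree-$d$ part (in $\eta=-a$) of the Taylor remainder carries a $d$-th order partial derivative of $f$, which lies in $\fm^{l-d}$, so that this part lies in $\fm^{d(m-k)+(l-d)}=\fm^{\,l+d(m-k-1)}$. Since $m-k-1\ge 1$ this is increasing in $d$ and minimized at $d=2$, giving order $\ge l+2(m-k-1)\ge m+1$ precisely when $m\ge N+1$. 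Carrying out this remainder analysis cleanly and entirely over $K$ — valid in characteristic two as well — is the technical core, while the conceptual novelty is that the discrete iteration together with Artin approximation replaces the integration of a vector field that is unavailable in positive characteristic.
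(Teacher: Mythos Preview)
Your proposal is correct and follows essentially the same architecture as the paper: establish the formal determinacy statement first, then upgrade to convergent power series via Artin approximation (Theorem~\ref{thm.approx}) applied to the single equation $f(y)=f(\bx)+h(\bx)$ with $c\ge 2$ so that the approximate solution is an automorphism. The only difference is that the paper simply cites \cite[Proof of Theorem~3]{BGM12} for the formal step, whereas you spell out the iteration and the remainder estimate $R\in\fm^{l+2(m-k-1)}\subseteq\fm^{m+1}$ explicitly (which is indeed the content of that reference).
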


\begin{proof} For $K\langle \bx \rangle = K[[\bx]]$ this is proved in
\cite[Proof of Theorem 3]{BGM12}.  If $K$ has a non-trivial valuation we apply Theorem \ref{thm.approx} as follows.
Let $p$ be the $(2k-\ord(f)+2)$-jet of $f$ (a polynomial). Then there exists $\bar\varphi(x)=(\bar\varphi_1(x),\ldots,\bar\varphi_n(x))\in K[[\bx]]$ such that $f(\bar\varphi(x))=p$. Using Theorem \ref{thm.approx} we find $\varphi(x)=(\varphi_1(x),\ldots,\varphi_n(x))\in K\langle \bx \rangle^n$ such that 
$$f(\varphi(x))=p \text{ and } \bar\varphi(x)\equiv \varphi(x) \text{ mod } \frak{m}^2.$$
This implies that $\varphi$ defines an automorphism of $K\langle \bx \rangle$ and $f$ is
       right $(2k-\ord(f)+2)$-determined. 
\end{proof}

\begin{Corollary}\label{cor.deter} 
 If $\mu(f)<\infty$, then  $f$ is right   $(2\mu(f)-\ord(f)+2)$-determined.
\end{Corollary}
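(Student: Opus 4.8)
The plan is to reduce the Corollary to Theorem~\ref{thm.deter} by making the single choice $k=\mu(f)$ and then verifying the inclusion hypothesis $\fm^{k+2}\subset\fm^2\cdot j(f)$, where I abbreviate the Jacobian ideal by $j(f):=\langle\frac{\partial f}{\partial x_1},\ldots,\frac{\partial f}{\partial x_n}\rangle$. Since the determinacy exponent supplied by Theorem~\ref{thm.deter} is $2k-\ord(f)+2$, the choice $k=\mu(f)$ yields exactly the number $2\mu(f)-\ord(f)+2$ claimed here. Thus the entire problem collapses to showing that finiteness of $\mu(f)$ forces $\fm^{\mu(f)+2}\subset\fm^2\cdot j(f)$.

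The key step is the purely algebraic claim that $\fm^{\mu(f)}\subset j(f)$, and I would argue it via finite length. Because $f\in\fm^2$, every partial derivative lies in $\fm$, so $j(f)\subset\fm$ and the Milnor algebra $R:=K\langle\bx\rangle/j(f)$ is a local $K$-algebra with maximal ideal $\mathfrak{n}:=\fm/j(f)$ and residue field $R/\mathfrak{n}=K\langle\bx\rangle/\fm=K$. By hypothesis $\dim_K R=\mu(f)<\infty$, so $R$ is Artinian and the descending chain $R\supseteq\mathfrak{n}\supseteq\mathfrak{n}^2\supseteq\cdots$ of finitely generated ideals must stabilize. By Nakayama's lemma, as soon as two consecutive terms coincide the common term is zero; hence, writing $N$ for the nilpotency index of $\mathfrak{n}$, all the inclusions in $R\supsetneq\mathfrak{n}\supsetneq\cdots\supsetneq\mathfrak{n}^{N-1}\supsetneq\mathfrak{n}^N=0$ are strict.

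Counting $K$-dimensions along this chain, each of the $N$ strict inclusions drops the dimension by at least one, so $\mu(f)=\dim_K R\ge N$. Consequently $\mathfrak{n}^{\mu(f)}\subseteq\mathfrak{n}^N=0$, which is precisely $\fm^{\mu(f)}\subseteq j(f)$. Multiplying by $\fm^2$ then gives $\fm^{\mu(f)+2}=\fm^2\cdot\fm^{\mu(f)}\subseteq\fm^2\cdot j(f)$, so the hypothesis of Theorem~\ref{thm.deter} is satisfied with $k=\mu(f)$, and the Corollary follows immediately.

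I expect the nilpotency bound $\fm^{\mu(f)}\subset j(f)$ to be the only genuine content; the rest is bookkeeping around Theorem~\ref{thm.deter}. The one point worth checking is that the length-versus-dimension count is valid over an arbitrary, possibly non-algebraically-closed field $K$, but this causes no difficulty: since $R/\mathfrak{n}=K$, each graded piece $\mathfrak{n}^i/\mathfrak{n}^{i+1}$ is a nonzero $K$-vector space exactly while $\mathfrak{n}^i\neq 0$, so the dimension drops by at least one at every strict step and the argument goes through verbatim.
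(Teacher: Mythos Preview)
Your argument is correct and is precisely the verification the paper leaves implicit: the Corollary is stated without proof because it is meant to follow immediately from Theorem~\ref{thm.deter} with the choice $k=\mu(f)$, and you have supplied the standard Artinian/Nakayama argument showing $\fm^{\mu(f)}\subset j(f)$ (hence $\fm^{\mu(f)+2}\subset\fm^2 j(f)$) that makes this work. There is nothing to add.
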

 \bigskip
 
\section{Splitting Lemma in Characteristic  $\ne 2$}
\bigskip\noindent

In this section let $K$ be a  field of characteristic different from 2. \\

The  classification of quadratic forms  in characteristic $\ne2$ is classically known (cf. \cite[Satz 2]{Wi36}):
every quadratic form $f$ is equivalent to a "diagonal" form,
$$ f \sim a_1x_1^2+...+ a_n x_n^2, \  a_i \in K.$$
If $k$ is the rank of $H(f)$, then $ f \sim a_1x_1^2+...+ a_k x_k^2$, with $a_i \ne 0$ for $i = 1,...,k$. 

Of course, if $K$ is algebraically closed or, more generally, if $ \sqrt {a_i} \in K$, then 
$ f \sim x_1^2+...+  x_k^2$. Moreover, for $K=\R$ we have $ f \sim x_1^2+...+  x_{n_0}^2 - x_{n_0+1}^2-...-  x_{n_0+n_1}^2$, $k=n_0+n_1$, (by Jacobi-Sylvester), where $(n-k,n_0,n_1)$,  the {\em signature} of $f$, is an invariant of $f$. In fact, any classification of quadratic forms for special fields can replace the quadratic part in the splitting lemma below.

We formulate now the splitting lemma as we prove it in this note. While the uniqueness of the residual part holds in general, for the existence we have to distinguish several cases. 
  
\begin{Theorem}[Formal splitting lemma in characteristic $\ne 2$]
\label{thm.fsplitne2}
Let $K$ be a real valued field.
\begin{enumerate}
\item Let $f_0, f_1\in\fm^2\subset K\<bx$ and assume that 
$$f_0 =  a_1x_1^2+\ldots+a_kx_k^2+g_0(x_{k+1},\dots,x_n) 
\rsim f_1 = a_1x_1^2+\ldots+a_kx_k^2+g_1(x_{k+1},\dots,x_n)$$
with $a_i \in K,  a_i \ne 0$ and $g_0, g_1\in \fm^3$. Then $g_0 \rsim g_1$ in $K\langle x_{k+1},...,x_n\rangle$.
\item Let $f \in K[[ \bx]]$ and    $\rank H(f)=k$. Then 
 $$ f\ \rsim \ a_1x_1^2+\ldots+a_kx_k^2+g(x_{k+1},\dots,x_n) $$
   with $a_i \in K,  a_i \ne 0$, and
   \mbox{$g\in\fm^3$}. 
   $g$ is called the {\em residual
     part\/}\index{residual part} of $f$. By 1. it is uniquely determined up to
   right equivalence in $K[[x_{k+1},...,x_n]]$.
   
The same statement holds if $f\in K\<bx$ has an isolated singularity\,\footnote{We say that  $f\in K\<bx$ has an isolated singularity if its Milnor number is finite.}, with $g \in K\langle x_{k+1},...,x_n \rangle$ unique up to right equivalence in $K\langle x_{k+1},...,x_n \rangle$.
\end{enumerate} 
\end{Theorem}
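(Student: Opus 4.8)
The plan is to treat existence (part~2) and uniqueness (part~1) separately, in both cases reducing everything to the implicit function theorem together with the homotopy (path) method for right equivalence, so that finite determinacy is never needed and non-isolated singularities are allowed.

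For existence I would first use the classification of quadratic forms in characteristic $\ne 2$ quoted above to arrange, after a linear change of coordinates, that $f = a_1x_1^2+\dots+a_kx_k^2 + h$ with $a_i\ne 0$ and $h\in\fm^3$, where $k=\rank H(f)$. The Jacobian of $(\partial f/\partial x_1,\dots,\partial f/\partial x_k)$ with respect to $x_1,\dots,x_k$ at the origin is $\mathrm{diag}(2a_1,\dots,2a_k)$, hence invertible, so by the implicit function theorem (Theorem~\ref{thm.implicit} in the convergent case, its formal analogue otherwise) there are $u_i(x')\in\fm^2\subset K\langle x_{k+1},\dots,x_n\rangle$, $x'=(x_{k+1},\dots,x_n)$, solving $\partial f/\partial x_i=0$ for $i\le k$. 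The automorphism $x_i\mapsto x_i+u_i(x')$ ($i\le k$), $x_j\mapsto x_j$ ($j>k$), transforms $f$ into a series whose $x_1,\dots,x_k$-partials vanish on $\{x_1=\dots=x_k=0\}$; thus it has no terms linear in $x_1,\dots,x_k$, its restriction $g(x')$ to that subspace lies in $\fm^3$, and its leading quadratic part in $x_1,\dots,x_k$ is the nondegenerate form $q:=a_1x_1^2+\dots+a_kx_k^2$. A parametric completion of squares (the relative Morse lemma, routine in characteristic $\ne 2$ since the leading coefficients are units) removes the higher $x_1,\dots,x_k$-terms and yields $f\rsim a_1x_1^2+\dots+a_kx_k^2+g(x')$; under the isolated-singularity hypothesis all steps stay convergent by Theorem~\ref{thm.implicit}.

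For uniqueness, suppose $\varphi(f_0)=f_1$ with $f_0=q+g_0$, $f_1=q+g_1$. Comparing $2$-jets gives $q\circ L=q$ for the linear part $L$ of $\varphi$; since in characteristic $\ne2$ the radical of the bilinear form of $q$ is exactly $\{x_1=\dots=x_k=0\}$, $L$ preserves this subspace, so $L$ is block lower triangular with both diagonal blocks invertible. In particular each $\varphi_i$ ($i\le k$) carries no residual variable in its linear part. Setting $x_1=\dots=x_k=0$ in the identity $q(\varphi_1,\dots,\varphi_k)+g_0(\varphi_{k+1},\dots,\varphi_n)=q+g_1$ then produces an automorphism $\psi$ of $K\langle x_{k+1},\dots,x_n\rangle$ (its linear part being the invertible lower-right block) and functions $P_i:=\varphi_i(0,\dots,0,x')\in\fm^2$ with
\[
 g_1 \;=\; \psi(g_0) \;+\; \textstyle\sum_{i\le k} a_i P_i^2 .
\]
Writing $\bar g:=\psi(g_0)$, the decisive point is that the correction $\sum_i a_iP_i^2$ can be absorbed by a right equivalence of $\bar g$. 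Differentiating the defining identity once in $x_1,\dots,x_k$ and again setting $x_1=\dots=x_k=0$, I would use invertibility of the upper-left block of $L$ to solve the resulting linear system and conclude $P_i\in J(\bar g):=\langle\partial\bar g/\partial x_{k+1},\dots,\partial\bar g/\partial x_n\rangle$; hence $\sum_i a_iP_i^2\in J(\bar g)^2$. The proof then closes with the lemma that $\bar g+h\rsim\bar g$ for any $h\in J(\bar g)^2$: along $\bar g_s=\bar g+sh$ one has $\partial_j h\in\fm J(\bar g)$, so $J(\bar g_s)=J(\bar g)$ by Nakayama, the homological equation $\sum_j\xi_j\,\partial_j\bar g_s=h$ is solvable with $\xi_j\in\fm$ uniformly in $s$, and integration gives the desired automorphism. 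Thus $g_1=\bar g+h\rsim\bar g=\psi(g_0)\rsim g_0$.

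I expect the main obstacle to be exactly the membership $P_i\in J(\bar g)$. The naive substitution $x_1=\dots=x_k=0$ only gives $P_i\in\fm^2$, which is insufficient: adding an arbitrary element of $\fm^2 J(\bar g)$ can change the right-equivalence class of a non-isolated singularity, so one must feed back the full force of the identity $\varphi(f_0)=f_1$, not merely its restriction to the residual subspace. Getting this book-keeping right — and, for the companion characteristic-two statement, replacing the diagonalization and the squares $\sum a_iP_i^2$ by the appropriate tricky substitutions — is where the real work lies; the homotopy lemma itself is soft and, crucially, uses no finite-determinacy hypothesis, which is precisely what allows non-isolated singularities over $K[[\bx]]$.
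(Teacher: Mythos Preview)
Your existence route is sound and actually gives more than you claim: once the implicit function theorem has centred the critical locus, Hadamard's lemma yields $f-g(x')=\sum_{i,j\le k}x_ix_jB_{ij}(x)$ with $B(0)=\mathrm{diag}(a_i)$, and in characteristic $\ne2$ this symmetric matrix is congruent over the local ring to $\mathrm{diag}(a_i)$, giving a convergent splitting with no isolated-singularity hypothesis. (The paper proceeds differently, by the iteration $x_i\mapsto x_i-\tfrac{1}{2a_i}g_i$ in the formal case and via finite determinacy plus nested Artin approximation in the convergent case.) The genuine gap lies in your uniqueness argument, and it is not where you locate it. The membership $P_i\in J(\bar g)$ does follow as you outline: the matrix $\bigl(\partial_m\varphi_i(0,x')\bigr)_{i,m\le k}$ has invertible constant part, so the differentiated identity expresses each $2a_iP_i$ as a $K\langle x'\rangle$-combination of the $(\partial_jg_0)\circ\psi\in J(\bar g)$. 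What breaks is the homotopy lemma itself. Solving $\partial_s\Phi_s=V(\Phi_s,s)$ as a formal power series in $s$ forces division by $m+1$ at the $s^{m+1}$-coefficient, and this is impossible once $\mathrm{char}\,K=p>2$ and $p\mid m+1$; since the theorem is stated for all characteristics $\ne2$, the flow simply does not exist, let alone evaluate at $s=1$. Iterative substitutes run into Artin--Rees obstacles (one would need $J(\bar g)^2\cap\fm^N\subset\fm^{N-c}J(\bar g)$ with controlled $c$), so no characteristic-free absorption lemma of the shape ``$h\in J(\bar g)^2\Rightarrow\bar g+h\rsim\bar g$'' is as soft as you suggest.

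The paper bypasses the absorption problem by a different substitution. Writing $\varphi_i=l_i+k_i$, one has $a_i\varphi_i^2=a_il_i^2+a_ik_i(2l_i+k_i)$, so the order-$\ge3$ part of $\varphi(f_0)=f_1$ reads $\sum_{i\le k}a_ik_i(2l_i+k_i)+g_0(\varphi_{k+1},\dots,\varphi_n)=g_1$. Rather than setting $x_1=\dots=x_k=0$, the paper solves the system $F_i:=2l_i+k_i=0$, $i=1,\dots,k$, for $x_1,\dots,x_k$ as functions $\psi_i(x_{k+1},\dots,x_n)$ via the implicit function theorem: the linear part of $F_i$ is $2l_i$, and $\det(\partial l_i/\partial x_j)_{i,j\le k}\ne0$ follows from the degree-two identity $\sum a_il_i^2=\sum a_ix_i^2$. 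This substitution annihilates the defect outright and yields $g_0(\varphi'_{k+1},\dots,\varphi'_n)=g_1$ with $\varphi'_j:=\varphi_j(\psi_1,\dots,\psi_k,x_{k+1},\dots,x_n)$; a short argument shows $\varphi'$ is an automorphism of $K\langle x_{k+1},\dots,x_n\rangle$. Only the implicit function theorem is used, so the proof is uniform in all characteristics $\ne2$ and works for $K\langle\bx\rangle$ over any real valued field.
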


\begin{proof}
1.  Let 
 $\varphi$  be an automorphism  of $K\<bx$ such that
$\varphi (f_0)= f_1$. Then
$\varphi$  is given by  
$$\varphi(x_i) =: \varphi_i(\bx) =: l_i(\bx)+k_i(\bx), \ i=1,...,n,$$ 
with $k_i \in \fm^2$ and $l_i$ linear forms with $\det \big(\frac{\partial l_i}{\partial x_j}\big) \ne 0$. Then $\varphi (f_0)= f_1$ means
$$
  a_1\varphi_1^2+\ldots+a_k\varphi_{k}^2+ \ g_0(\varphi_{k+1},\ldots,\varphi_n) =
 a_1x_1^2+\ldots+a_kx_{k}^2 + \ g_1(x_{k+1},\ldots,x_n).
$$
Comparing the terms of  order 2 and of order $\ge 3$,  we get
\begin{align}
\tag{*}\label{*}
\begin{split}
& a_1l_1^2+\ldots+a_kl_{k}^2   =  a_1x_1^2+\ldots+a_kx_k^2 \ \text{ and}  \\
 &\sum_{ i=1}^{k}  a_ik_{i} (2l_{i} + k_{i}) + g_0(\varphi_{k+1},\ldots,\varphi_n) = g_1(x_{k+1},\ldots,x_n).
 \end{split}
 \end{align}
We set $F_i := 2l_{i} + k_{i}$, $i = 1,...,k$, and assume that there are $\psi_1,...,\psi_{k} \in K\langle x_{k+1},...,x_n \rangle$ satisfying 
$$ F_i (\psi_1,...,\psi_{k}, x_{k+1},...,x_n)=0, \ i = 1,...,k.$$
Then we  define the endomorphism $\varphi'$ of $K\langle x_{k+1},...,x_n \rangle$
by
 $$\varphi'(x_i):= \varphi_i'(x_{k+1},\ldots,x_n) := 
 \varphi_i(\psi_1,...,\psi_{k},x_{k+1},\ldots,x_n), \ i = k+1,...,n.$$ 
This kills the terms $ a_ik_{i} (2l_{i} + k_{i})$
and by (\ref{*}) we get as required
$$g_0(\varphi'_{k+1},\ldots,\varphi'_n) =
g_1(x_{k+1},\ldots,x_n).$$

We have still to show that the endomorphism 
 $\psi$ exists and that $\varphi'$ is an automorphism of  $K\langle x_{k+1},...,x_n \rangle$.

\noindent We note that $(F_1,...,F_{k}, \varphi_{k+1},...,\varphi_{n})$ 
 is an automorphism of $K\langle x_1,...,x_n \rangle$ 
 since 
 $(\varphi_1,...,\varphi_n)$ is an automorphism and since $l_i$ is the linear part of $\varphi_i$.
 It follows that 
 $$\langle F_1, ...,F_k,  \varphi_{k+1},...,\varphi_{n} \rangle = \langle x_{1} ,...,x_{n}\rangle$$
  and if we  replace $x_i$ by $\psi_i(x_{k+1},...,x_n)$ for $i=1,...,k$, we get 
  $\langle 0, ..., 0, \varphi'_{k+1},...,\varphi'_{n} \rangle = \langle x_{k+1} ,...,x_{n}\rangle$. This shows that $\varphi'$ is an automorphism of $K\langle x_{k+1},...,x_n\rangle$.
  
  To show the existence of $\psi$, 
  we want to apply the  implicit function theorem (Theorem \ref{thm.implicit}) to $F_1,...,F_{k}$. For this we must show $\det \big ( \frac{\partial F_i}{\partial x_j} (0)\big )_{i,j = 1,...,k}=
\det \big ( \frac{\partial l_i}{\partial x_j}\big )_{i,j = 1,...,k} \ne 0$. 
The quadratic terms of (\ref{*}) read
$$\ell:= a_1 l_1^2+\ldots+a_kl_{k}^2   =  a_1x_1^2+\ldots+a_kx_k^2,$$
 and  we get $\frac{\partial \ell}{\partial x_i} = 2a_ix_{i}$  for $i \le k$. 
  Since $\frac{\partial \ell}{\partial x_i} \in \langle l_{1} ,...,l_{k}\rangle$,
  it follows that $\langle x_{1} ,...,x_{k}\rangle \subset \langle l_{1} ,...,l_{k}\rangle$ and hence, with $l'_i(x_1,...,x_{k}) 
  := l_i (x_{1} ,...,x_{k},0,...,0)$, 
  we get $\langle x_{1} ,...,x_{k}\rangle = \langle l'_{1} ,...,l'_{k}\rangle$. Hence $\det \big ( \frac{\partial l_i}{\partial x_j}\big )_{i,j = 1,...,k} = \det \big ( \frac{\partial l'_i}{\partial x_j}\big )_{i,j = 1,...,k} \ne 0$.\\

2. As  the coefficient matrix of the 2-jet of $f$  has rank $k$, the $2$-jet of $f$ can   be transformed into \mbox{$a_1x_1^2+\ldots+a_k x_k^2$} by a linear change of  coordinates, as follows from  the classification of quadratic forms mentioned above. Hence,  we can assume that 
$ f(\bx)=a_1x_1^2+\ldots+a_kx_k^2+g(x_{1},\dots,x_n)$, $g\in \fm^3$.

We prove the existence of the splitting for formal power series by constructing coordinate changes of increasing order,  providing a splitting modulo increasing powers of $\fm$, which converge formally to a formal coordinate change.
To do so write $f$ as
   $$ f(\bx)=a_1x_1^2+\ldots+a_kx_k^2+f_3(x_{k+1},\dots,x_n)+\sum_{i=1}^k   x_i\cdot g_i(x_1,\dots,x_n)\,, $$
   with $a_i\ne0$, \mbox{$g_i\in\fm^2$}, and \mbox{$f_3\in\fm^3$}. The coordinate
   change \mbox{$x_i\mapsto x_i-\frac{1}{2a_i}g_i$} for
  $i=1,\dots,k$, and \mbox{$x_i\mapsto x_i$} for
   \mbox{$i>k$}, yields
 $$ f(\bx)=a_1x_1^2+\ldots+a_kx_k^2+f_3(x_{k+1},\dots,x_n)+f_4(x_{k+1},\dots,x_n)+   \sum_{i=1}^kx_i\cdot h_i(\bx)\,, $$
   with \mbox{$h_i\in\fm^3$}, \mbox{$f_4\in\fm^4$}. Continuing with
   $h_i$ instead of $g_i$ in the same manner, the last sum will be of
   arbitrary high order, \mbox{hence $0$} in the limit. Moreover, the composition of the coordinate changes converges (in the $\fm$-adic topology) to a formal coordinate change such that the residual part
   \mbox{$g(x_{k+1},\dots,x_n)$} in the theorem is a formal power
   series.    
   
It remains to prove that if $f$ has an isolated singularity, then   
   $$ f\ \rsim \ a_1x_1^2+\ldots+a_kx_k^2+g(x_{k+1},\dots,x_n) $$
   with $a_i \in K,  a_i \ne 0$, and
   \mbox{$g\in\fm^3\cap K\langle x_{k+1},\ldots,x_n\rangle$}. This is a consequence of the following Theorem \ref{thm.asplitne2} since
   $f$ is right equivalent to a polynomial by Corollary \ref{cor.deter} (hence an algebraic power series) and the fact that algebraic power series are convergent (Lemma \ref{lm.alg}). 
\end{proof}

We prove now the splitting lemma for the ring of algebraic power series $K\<bx^{alg}$.
We call a $K$-algebra automorphism of  $K\<bx^{alg} $ an algebraic coordinate change. 

 \begin{Theorem}[Algebraic splitting lemma in characteristic $\ne 2$]
\label{thm.asplitne2}
Let  $K$ be any field and $f \in \fm^2$  be an algebraic power series with   $\rank H(f)=k$. Then
 $$ f\ \rsim \ a_1x_1^2+\ldots+a_kx_k^2+g(x_{k+1},\dots,x_n), $$
 with $a_i \in K,  a_i \ne 0$. The right equivalence is given by an algebraic coordinate change, and the residual part
   \mbox{$g\in\fm^3$} is an algebraic power series. 
   $g$  is uniquely determined up to
   right equivalence in
   $K\langle x_{k+1},...,x_n\rangle^{alg}$.
\end{Theorem}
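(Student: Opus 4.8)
The plan is to prove the algebraic splitting lemma (Theorem~\ref{thm.asplitne2}) by transferring the formal result to the algebraic setting via the nested Artin approximation theorem, and then deducing uniqueness from Theorem~\ref{thm.fsplitne2}(1). First I would establish \emph{existence}. As in the proof of Theorem~\ref{thm.fsplitne2}(2), a linear change of coordinates (over $K$, hence algebraic) puts $f$ in the form $f=a_1x_1^2+\ldots+a_kx_k^2+g(x_1,\ldots,x_n)$ with $g\in\fm^3$ and $a_i\neq0$. The formal splitting lemma already provides a \emph{formal} coordinate change producing the desired split form with residual part depending only on $x_{k+1},\ldots,x_n$. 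The key idea is to encode "$f$ is right equivalent to $a_1x_1^2+\ldots+a_kx_k^2+g(x_{k+1},\ldots,x_n)$" as a system of algebraic equations $F=0$ whose unknowns are the coefficients of the coordinate-change components $\varphi_1,\ldots,\varphi_n$ and of the residual part $g$, noting that $g$ is allowed to depend only on the nested subset of variables $x_{k+1},\ldots,x_n$. The formal solution furnished by Theorem~\ref{thm.fsplitne2}(2) is then a formal solution of this system, and Theorem~\ref{thm.nestapprox} (nested Artin approximation) yields an \emph{algebraic} solution agreeing with the formal one to high order; choosing the approximation order $\ge 2$ guarantees that the approximating $\varphi$ still has invertible linear part, hence defines an algebraic coordinate change, and that the approximate $g$ lies in $\fm^3\cap K\langle x_{k+1},\ldots,x_n\rangle^{alg}$.

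For \emph{uniqueness} I would invoke Theorem~\ref{thm.fsplitne2}(1) directly. Suppose $f$ admits two algebraic splittings with residual parts $g_0$ and $g_1$. Since algebraic power series are in particular formal power series, both identities
$$f \rsim a_1x_1^2+\ldots+a_kx_k^2+g_0 \quad\text{and}\quad f\rsim a_1x_1^2+\ldots+a_kx_k^2+g_1$$
hold in $K[[\bx]]$ (after arranging, by a preliminary linear change, that both splittings share the same quadratic part $a_1x_1^2+\ldots+a_kx_k^2$; this is possible because the diagonalized quadratic form is determined by $H(f)$ up to equivalence). Then $a_1x_1^2+\ldots+a_kx_k^2+g_0\rsim a_1x_1^2+\ldots+a_kx_k^2+g_1$ formally, so Theorem~\ref{thm.fsplitne2}(1) gives $g_0\rsim g_1$ in $K[[x_{k+1},\ldots,x_n]]$. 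To upgrade this to right equivalence in $K\langle x_{k+1},\ldots,x_n\rangle^{alg}$, I would again set up the equation "$g_0$ is right equivalent to $g_1$" as an algebraic system in the coefficients of the coordinate change, observe that the formal equivalence just produced is a formal solution, and apply Theorem~\ref{thm.nestapprox} once more (here ordinary, non-nested, approximation suffices) to obtain an algebraic coordinate change realizing $g_0\rsim g_1$.

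The main obstacle I anticipate is the careful \emph{algebraization of the conditions} so that the hypotheses of the nested approximation theorem are genuinely met. Two points require attention. First, the condition that the residual part depend only on $x_{k+1},\ldots,x_n$ is precisely the kind of variable-nesting constraint that ordinary Artin approximation cannot respect but that Theorem~\ref{thm.nestapprox} is designed to handle; I must phrase the system so that the unknowns corresponding to $g$ are sought in $K\langle x_{k+1},\ldots,x_n\rangle^{alg}$, i.e.\ with $s_i<n$ for those components, while the coordinate-change unknowns range over all variables. Second, I must verify that the defining relations are actually \emph{polynomial} (equivalently, that $f$, being algebraic, lets us express "$\varphi(f)$ equals the split form" through finitely many algebraic equations); the cleanest route is to treat the coefficients of $\varphi$ and $g$ up to a determinacy bound as the finitely many unknowns, using Corollary~\ref{cor.deter} to reduce to a finite-jet condition when $f$ has finite Milnor number, and otherwise to work with the coefficient equations degree by degree in a way compatible with the nested structure. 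Once the system is correctly formulated, the openness of the invertibility condition on the linear part (preserved under sufficiently high-order approximation) makes the remaining verification routine.
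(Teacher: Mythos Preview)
Your overall strategy---formal splitting lemma followed by nested Artin approximation, then uniqueness via Theorem~\ref{thm.fsplitne2}(1) upgraded again by approximation---is exactly the paper's approach. However, the ``main obstacle'' you anticipate reflects a misreading of how Theorem~\ref{thm.nestapprox} is to be applied, and your proposed workaround would leave a genuine gap in the non-isolated case.

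You propose to take the \emph{coefficients} of $\varphi_1,\dots,\varphi_n$ and $g$ as the unknowns, which forces you either to truncate via a determinacy bound (unavailable when $\mu(f)=\infty$) or to argue ``degree by degree'' (which yields only a formal, not an algebraic, solution). Neither is needed. In Theorem~\ref{thm.nestapprox} the unknowns $y_1,\dots,y_m$ are themselves power series, not scalars. The paper applies it with the $n+1$ unknowns $y_1,\dots,y_n$ (standing for $\varphi_1,\dots,\varphi_n$, each with $s_i=n$) and $y_{n+1}$ (standing for $g$, with the nesting constraint that it depend only on $x_{k+1},\dots,x_n$), and the \emph{single} equation
\[
F(x,y)\;:=\;f(y_1,\dots,y_n)\;-\;\sum_{i=1}^{k}a_ix_i^2\;-\;y_{n+1}\;=\;0.
\]
Since $f$ is an algebraic power series, $f(y_1,\dots,y_n)\in K\langle y\rangle^{alg}\subset K\langle x,y\rangle^{alg}$, and the remaining terms are polynomials, so $F\in K\langle x,y\rangle^{alg}$ as required. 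The formal splitting furnishes a formal solution $(\bar\varphi_1,\dots,\bar\varphi_n,\bar g)$, Theorem~\ref{thm.nestapprox} produces an algebraic one congruent to it $\bmod\ \fm^4$, and this suffices to guarantee that $\varphi$ is an automorphism and $g\in\fm^3$. The same device, with the single equation $g_0(y_1,\dots,y_{n-k})-g_1=0$, handles uniqueness. No coefficient encoding, no finite-determinacy reduction, and no case distinction on $\mu(f)$ is involved; this is precisely what lets the theorem cover non-isolated singularities.
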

\begin{proof}
Theorem \ref{thm.fsplitne2} implies that
$$f(\bar\varphi_1(x),\ldots,\bar\varphi_n(x))=a_1x_1^2+\ldots + a_kx_k^2 +\bar g(x_{k+1},\ldots , x_n)$$
for a suitable $\bar g\in  K[[x_{k+1},\ldots,x_n]]$ of order $\ge 3$, and an automorphism $\bar\varphi=(\bar\varphi_1(x),\ldots,\bar\varphi_n(x))$ of $K[[x]]$. From Theorem \ref{thm.nestapprox} we deduce the existence of $\varphi(x)=(\varphi_1(x),\ldots,\varphi_n(x))\in (K\<bx^{alg})^n$ and $g(x_{k+1},\ldots,x_n)\in K\langle x_{k+1},\ldots,x_n\rangle^{alg}$ such that
$$f(\varphi_1(x),\ldots,\varphi_n(x))=a_1x_1^2+\ldots + a_kx_k^2 +g(x_{k+1},\ldots , x_n)$$
and
$$\bar\varphi\equiv\varphi \text{ mod } \frak{m}^4,  \bar g \equiv g \text{ mod } \frak{m}^4.$$
This implies that $\varphi$ is an automorphism of $K\<bx^{alg}$ and $g\in \frak{m}^3$.\\
Now assume that $$a_1x_1^2+\ldots+a_kx_k^2+g_1(x_{k+1},\dots,x_n)\sim a_1x_1^2+\ldots+a_kx_k^2+g_2(x_{k+1},\dots,x_n)$$
for $g_1,g_2\in K\langle x_{k+1},\ldots,x_n\rangle^{alg}$.
Theorem \ref{thm.fsplitne2} implies that $g_1$ and $ g_2$ are right equivalent in $K[[x_{k+1},\ldots,x_n]]$. Theorem \ref{thm.nestapprox}
implies that they are also right equivalent in $K\langle x_{k+1},\ldots,x_n\rangle^{alg}$.
\end{proof}

\begin{Corollary} \label{cor.splitne2}
Let $K$ be an arbitrary field of characteristic $\ne 2$ and $K\<bx$ be either $K[[\bx]]$ or $K\<bx^{alg}$  and $f \in \fm^2$.
Assume moreover that $K$ coincides with its subfield $K^2$ of squares. Then 
  $$ f\ \rsim x_1^2+\ldots+x_k^2+g(x_{k+1},\dots,x_n), $$
with $g$  uniquely determined up to  right equivalence. 
The same  holds for $f\in K\<bx$, with $K$ a real valued field, provided $f$ has an isolated singularity.
\end{Corollary}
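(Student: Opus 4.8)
The plan is to deduce the corollary directly from the splitting lemmas already established, since the only new hypothesis is $K=K^2$, which we use to normalize the diagonal coefficients to $1$. No genuinely new idea is needed; the work is in citing the correct existence-and-uniqueness statement for each of the three settings and then checking that a simple rescaling is a legitimate coordinate change.

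First I would invoke the appropriate existence result. If $K\langle \bx\rangle=K\langle \bx\rangle^{alg}$, apply Theorem \ref{thm.asplitne2}; if $K\langle \bx\rangle=K[[\bx]]$, apply Theorem \ref{thm.fsplitne2}(2); and for $f\in K\langle \bx\rangle$ over a real valued field with finite Milnor number, apply the last sentence of Theorem \ref{thm.fsplitne2}(2). In each case, with $k=\rank H(f)$, this produces a right equivalence
$$f\ \rsim\ a_1x_1^2+\ldots+a_kx_k^2+g(x_{k+1},\dots,x_n),$$
with $a_i\in K$, $a_i\neq0$, and $g\in\fm^3$ in the corresponding ring.

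Next I would remove the coefficients $a_i$. Because $K=K^2$, for each $i\le k$ there is $c_i\in K$ with $c_i^2=a_i$, and $c_i\neq0$ since $a_i\neq0$. The linear change $\varphi$ with $\varphi(x_i)=c_i^{-1}x_i$ for $i\le k$ and $\varphi(x_i)=x_i$ for $i>k$ is an automorphism of $K\langle \bx\rangle$ (in each flavour it is linear with nonzero Jacobian determinant $\prod_{i\le k}c_i^{-1}$, and a linear map is in particular an algebraic coordinate change). Applying it sends each $a_ix_i^2=c_i^2x_i^2$ to $x_i^2$ and, since $\varphi$ fixes the residual variables, leaves $g(x_{k+1},\dots,x_n)$ unchanged. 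By transitivity of $\rsim$ we obtain
$$f\ \rsim\ x_1^2+\ldots+x_k^2+g(x_{k+1},\dots,x_n),$$
as claimed.

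Finally, the uniqueness of the residual part $g$ up to right equivalence is precisely the uniqueness clause of the theorem invoked in the first step: Theorem \ref{thm.fsplitne2}(1) in the formal and convergent cases and Theorem \ref{thm.asplitne2} in the algebraic case. Since the normalizing change $\varphi$ involves only $x_1,\dots,x_k$, it transports the residual part verbatim and therefore does not affect this uniqueness. The one point worth a sentence of verification is that $\varphi$ is an honest coordinate change in whichever ring $K\langle \bx\rangle$ we are working in and that it does not touch $x_{k+1},\dots,x_n$; beyond that there is no real obstacle, as the substance of the statement is already contained in the cited splitting lemmas.
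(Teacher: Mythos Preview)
Your proof is correct and follows essentially the same approach as the paper's own proof, which simply observes that $K=K^2$ gives $\sqrt{a_i}\in K$, so the quadratic form $a_1x_1^2+\ldots+a_kx_k^2$ is equivalent to $x_1^2+\ldots+x_k^2$, and then cites Theorems~\ref{thm.fsplitne2} and~\ref{thm.asplitne2}. You have written out the case distinction and the rescaling more explicitly, but the content is the same.
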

\begin{proof}
If $K^2 = K$, then $\sqrt{a_i} \in K$ (this is the only condition we need) and the quadratic form  $a_1x_1^2+\ldots+a_kx_k^2$ is equivalent to $x_1^2+\ldots+x_k^2$. The result follows from Theorems \ref{thm.fsplitne2} and \ref{thm.asplitne2}.
\end{proof}

 \bigskip
 We are now going to prove the splitting lemma in the analytic case. An element $f$ in $K\<bx$,
 $K \in \{\C, \R\}$ with absolute value as valuation, is called an {\em analytic power series} and an automorphism of $K\<bx$ an analytic coordinate change.
 
\begin{Theorem}[Analytic splitting lemma]
\label{thm.ansplit}
Let  $K$ be either $\C$ or $\R$ and $K\<bx$ the usual convergent power series ring. Let
$f \in \fm^2 \subset K\<bx$  with   $\rank H(f)=k$. Then
 $$ f\ \rsim \ a_1x_1^2+\ldots+a_kx_k^2+g(x_{k+1},\dots,x_n), $$
 given by an analytic coordinate change, 
  with $a_i =1$ if $K=\C$ and   $a_i = \pm 1$ if $K=\R$, and 
   \mbox{$g\in\fm^3$}. The residual part $g$ is an analytic power series that   is uniquely determined up to
   right equivalence in
   $K\langle x_{k+1},...,x_n\rangle$.
\end{Theorem}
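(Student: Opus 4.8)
The plan is to prove existence and uniqueness separately and, crucially, to avoid the finite-determinacy argument available in the isolated case (Corollary \ref{cor.deter}), which collapses when $\mu(f)=\infty$. One is tempted to deduce existence from the formal splitting (Theorem \ref{thm.fsplitne2}) by approximating the formal coordinate change, but the residual part is required to depend only on $x_{k+1},\dots,x_n$, so this is a \emph{nested} approximation problem, and nested approximation is false in the analytic category (see the footnote to Theorem \ref{thm.nestapprox}). Hence I would establish existence by a direct convergent construction, and derive uniqueness from the formal case together with the \emph{ordinary} (non-nested) Artin approximation.

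For existence I would first normalise the quadratic part: by the classification over $\C$ (resp. over $\R$ by Jacobi--Sylvester) a linear, hence analytic, coordinate change brings the $2$-jet of $f$ to $a_1x_1^2+\dots+a_kx_k^2$ with $a_i=1$ (resp. $a_i=\pm1$), so we may assume $f=\sum_{i=1}^k a_ix_i^2+\tilde f$ with $\tilde f\in\fm^3$. Since $\partial f/\partial x_i=2a_ix_i+r_i$, $r_i\in\fm^2$, for $i\le k$, the Jacobian $\det(\partial^2 f/\partial x_i\partial x_j(0))_{i,j\le k}=2^k a_1\cdots a_k\neq0$, and the implicit function theorem (Theorem \ref{thm.implicit}) yields convergent $\psi_1,\dots,\psi_k\in K\langle x_{k+1},\dots,x_n\rangle$ with $\psi_i(0)=0$ and $\partial f/\partial x_i(\psi_1,\dots,\psi_k,x_{k+1},\dots,x_n)=0$ for $i\le k$. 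The analytic shift $x_i\mapsto x_i+\psi_i$ ($i\le k$), $x_i\mapsto x_i$ ($i>k$) has trivial linear part, so it preserves the $2$-jet and turns $f$ into a series $\hat f$ whose first $k$ partials vanish along $\{x_1=\dots=x_k=0\}$. Consequently $\hat f=g(x_{k+1},\dots,x_n)+\sum_{i,j\le k}x_ix_j c_{ij}(x)$, where $g:=\hat f|_{x_1=\dots=x_k=0}$ is convergent and, by preservation of the $2$-jet, $(c_{ij}(0))=\mathrm{diag}(a_1,\dots,a_k)$ is nondegenerate; moreover $g\in\fm^3$ since the $2$-jet of $\hat f$ is $\sum a_ix_i^2$.

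It then remains to diagonalise the family of quadratic forms $\sum_{i,j\le k}x_ix_j c_{ij}(x)$ over $K\langle x_{k+1},\dots,x_n\rangle$ by a parametrised completion of squares. As $c_{11}$ is a unit, one isolates $c_{11}(x_1+\cdots)^2$, absorbs the cross terms, and replaces $x_1$ by $y_1:=\sqrt{a_1 c_{11}}\,(x_1+\cdots)$; here $a_1c_{11}$ has constant term $1$, so its square root is a convergent unit (over $\C$ every unit has an analytic square root, over $\R$ one with positive constant term does), producing the term $a_1y_1^2$ and leaving a quadratic form in $x_2,\dots,x_k$ with analytic coefficients and still nondegenerate constant part. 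Iterating $k$ times gives an analytic coordinate change in $x_1,\dots,x_k$ that fixes $x_{k+1},\dots,x_n$ and hence leaves $g$ untouched, after which $f\rsim \sum_{i=1}^k a_iy_i^2+g(x_{k+1},\dots,x_n)$; the composite of the three changes is an analytic automorphism, proving existence. Note that this argument never uses that the singularity is isolated.

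For uniqueness suppose $\sum a_ix_i^2+g_1\rsim\sum a_ix_i^2+g_2$ analytically with $g_1,g_2\in\fm^3\cap K\langle x_{k+1},\dots,x_n\rangle$. By Theorem \ref{thm.fsplitne2}(1) they are right equivalent in $K[[x_{k+1},\dots,x_n]]$, i.e. there is a formal automorphism $\bar\varphi$ with $g_1(\bar\varphi)=g_2$. Applying Artin approximation (Theorem \ref{thm.approx}, applicable since $K=\C$ or $\R$ has characteristic $0$) to the single convergent equation $g_1(y)-g_2(x)=0$ in the $n-k$ unknowns $y$, with formal solution $y=\bar\varphi(x)$, produces a convergent solution $\varphi$ with $\varphi\equiv\bar\varphi \ \mathrm{mod}\ \fm^2$; since $\varphi$ then shares the invertible linear part of $\bar\varphi$ it is an analytic automorphism, so $g_1\rsim g_2$ in $K\langle x_{k+1},\dots,x_n\rangle$. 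There is no nesting obstruction here because $g_1,g_2$ are already fixed convergent series. I expect the main difficulty to be the convergent family Morse lemma of the existence step — in particular controlling the analytic square roots and, over $\R$, the signs — together with the recognition that the formal-splitting-plus-approximation shortcut is unavailable for existence because nested analytic approximation fails; one could alternatively organise the existence proof through the versal unfolding of $f$.
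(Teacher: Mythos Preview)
Your argument is correct, but it differs from the paper's in both steps. For existence the paper does not carry out a parametric Morse lemma; instead, after normalising the $2$-jet it first restricts to $x_{k+1}=\cdots=x_n=0$, uses that $q=a_1x_1^2+\cdots+a_kx_k^2$ is $2$-determined (Corollary~\ref{cor.deter}, applied to $q$, not to $f$) to bring $f(x_1,\dots,x_k,0,\dots,0)$ back to $q$, and is then in the situation where $f$ is an \emph{unfolding} of $q$ with parameters $x_{k+1},\dots,x_n$. Since $\langle \partial q/\partial x_i\rangle=\fm$, Mather's theorem (Theorem~\ref{thm.mather}) says $q$ is its own miniversal unfolding, so $f$ is isomorphic to the constant unfolding $q(x_1,\dots,x_k)$, which immediately yields the analytic splitting with $g=-\alpha$. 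Your route via the implicit function theorem plus iterated completion of squares is more elementary and self-contained (it avoids importing unfolding theory), while the paper's route is shorter once Mather's theorem is available and makes the geometric content---triviality of the family $f$ over the residual variables---transparent. Your closing remark anticipates this: the paper indeed ``organises the existence proof through the versal unfolding'', but of $q$, not of $f$.

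For uniqueness you work harder than necessary. Theorem~\ref{thm.fsplitne2}(1) is stated and proved for an arbitrary real valued field $K$ and concludes $g_0\rsim g_1$ in $K\langle x_{k+1},\dots,x_n\rangle$, not merely in $K[[x_{k+1},\dots,x_n]]$; its proof uses only the implicit function theorem, which holds in the convergent ring. Hence, taking $K=\C$ or $\R$ with the usual absolute value, the convergent uniqueness is immediate and the detour through Artin approximation is superfluous (though not incorrect).
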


We do not require that $f$ hs an isolated singularity. For the proof we need the notion of (versal) unfoldings, which can be formulated for any real valued field $K$.
 
 \begin{Definition} \label {def.morph}
 Let $f \in \fm \subset K\<bx =K\langle x_1,...,x_n\rangle$. 
 \begin{enumerate}
 \item An {\em (r-parameter) unfolding} of $f$ is a power series 
 $F \in K\langle \bx, \bu \rangle$, $\bu= (u_1,...,u_r)$, such that $F(\bx, \bo) =f$. \\
If $F(\bx,\bu) = f(\bx)$ we say that $F$ is the {\em constant unfolding} of $f$.
 \item Let $F \in K\langle \bx, \bu\rangle$ and $G \in K\langle \bx, \bv\rangle$, $\bv = (v_1,...,v_s)$,
 be two unfoldings of $f$. 
 A {\em morphism from $F$ to $G$} is given by a pair
 $$(\Phi, \alpha) : F \to G,$$

 \begin{enumerate}
  \item [(i)] $\Phi : K\langle \bx, \bv\rangle \to K\langle \bx, \bu\rangle$, a $K$-algebra morphism with \\
  $\Phi(x_i) =: \varphi_i \in K\langle \bx, \bu\rangle$, $\varphi_i(\bx,\bo) = x_i$, $i=1,...,n$,\\
   $\Phi(v_j) =: \phi_j \in K\langle \bu\rangle$, $\phi_j(\bo) = 0$, $\ j=1,...,s$,  and   
 \item [(ii)] $\alpha \in  K\langle \bu \rangle$, $\alpha(\bo) = 0$,
  \end{enumerate}
  such that 
  $F(\bx,\bu)  = \Phi (G) (\bx,\bu) + \alpha(\bu)$.
\medskip

 If this holds, we say that {\em $F$ is induced  from $G$} by $\Phi$.  If $\Phi$ is an isomorphism (which implies  $r=s$), we say that $F$ and $G$ are {\em isomorphic unfoldings}.
\item  An unfolding $F$ of $f$ is called {\em  versal}  (or {\em complete} in the terminology of \cite{GLS07})  if any unfolding of $f$ is induced from $F$ by a suitable morphism. A versal unfolding $F$ with minimal number of parameters $r$ is called {\em miniversal}. 
 \end{enumerate}
 \end{Definition} 

The following theorem is due to J. Mather for  $C^\infty$-functions.

\begin{Theorem} \label{thm.mather}
Let $K$ be $\R$ or $\C$ with the usual real valuations and $f\in \fm \subset K\<bx$.
\begin{enumerate}
\item The singularity $f$ has a versal unfolding if and only if $\mu(f)$ is finite.
\item Two $r$-parameter versal unfoldings of $f$ are isomorphic.
\item Every versal unfolding of $f$  is isomorphic to a constant unfolding  of a miniversal unfolding of $f$.
\item If $\{b_1,...,b_r\} \subset \fm$ represents a $K$-basis of 
$\fm/\langle \frac{\partial f}{\partial x_1},..., \frac{\partial f}{\partial x_n}\rangle$,
then
$$F(\bx,\bu) = f(\bx) + u_1b_1(\bx)+...+u_rb_r(\bx)$$
is a miniversal unfolding of $f$.
\end{enumerate}
\end{Theorem}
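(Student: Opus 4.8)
The plan is to reduce all four statements to a single \emph{infinitesimal versality criterion} and then to establish that criterion by the homotopy (path) method. Writing $J_f := \la \frac{\partial f}{\partial x_1},\ldots,\frac{\partial f}{\partial x_n}\ra \subset K\<bx$ for the Jacobian ideal and $\dot F_j := \frac{\partial F}{\partial u_j}(\bx,\bo)$ for the infinitesimal deformations attached to an $r$-parameter unfolding $F$, the criterion I would prove is
$$F \text{ is versal} \iff K\<bx = J_f + K\cdot 1 + \sum_{j=1}^r K\cdot \dot F_j .$$
Here the summand $K\cdot 1$ accounts for the freedom of adding $\alpha(\bu)$ in a morphism, while $J_f$ is the tangent space to the right-equivalence orbit: because the coordinate changes $\varphi_i(\bx,\bu)$ in Definition~\ref{def.morph} are only required to satisfy $\varphi_i(\bx,\bo)=x_i$ and not to fix the origin for $\bu\ne\bo$, the associated vector fields $\partial\varphi_i/\partial u_j|_{\bo}$ range over all of $K\<bx$ and generate the full ideal $J_f$. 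Differentiating the induced relation $f + t\,h = \Phi(F)(\bx,t) + \alpha(t)$ of an arbitrary one-parameter unfolding $f+t\,h$ at $t=0$ yields the \emph{necessity} half of the criterion essentially for free.

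Granting the criterion, the four statements follow quickly. For statement~1, if $F$ is versal then necessity gives $\dim_K K\<bx/J_f \le r+1 <\infty$, i.e.\ $\mu(f)<\infty$; conversely, if $\mu(f)<\infty$ I would exhibit the explicit unfolding of statement~4 and verify the criterion directly, since by construction $\dot F_j = b_j$ and $\{b_j\}$ spans $K\<bx/(J_f + K\cdot 1)$, which for $f\in\fm^2$ is exactly the image of $\fm/J_f$. This simultaneously proves statement~4, minimality of $r=\dim_K \fm/J_f$ following because any versal unfolding needs at least $\dim_K \fm/J_f$ parameters by the spanning condition. For statements~2 and~3 I would again use the homotopy method: given two versal unfoldings, or a versal unfolding together with the constant extension of a miniversal one, one interpolates them by a family over an auxiliary interval and integrates a $\bu$-dependent vector field to produce the required isomorphism of unfoldings; this bookkeeping is routine once the integration step is available.

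The real work, and the main obstacle, is the \emph{sufficiency} half of the criterion. The strategy is the classical one: it suffices to show that adjoining one extra parameter to an infinitesimally versal $F$ gives an unfolding isomorphic to the constant extension of $F$. This is achieved by writing a homotopy $F_t$ and solving $\tfrac{d}{dt}F_t = \sum_i \xi_i\,\partial F_t/\partial x_i + \sum_j c_j\,\partial F_t/\partial u_j + c_0$ for a time-dependent vector field $(\xi_i)$ together with scalars $c_j,c_0$; that is, integrating the infinitesimal data supplied by the criterion into an actual morphism. The solvability of the linear system at each $t$ is precisely the spanning condition, and one then integrates the resulting ODE in the power series ring.

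The delicate point specific to the convergent setting is ensuring that the integrated coordinate change lies in $K\<bx$ rather than merely in $K[[\bx]]$. I would handle this either through the Weierstra{\ss} division and preparation estimates, as in the Mather--Martinet approach, which keep the solution inside the convergent ring, or—more economically here—by first invoking finite determinacy: since $\mu(f)<\infty$, Corollary~\ref{cor.deter} lets me replace $f$ by a polynomial representative and work with the finite-dimensional module $K\<bx/J_f$, so that the linear algebra and the integration take place at a finite jet level where convergence over $K=\R$ or $\C$ is automatic. With the criterion secured, statements~1--4 follow as indicated.
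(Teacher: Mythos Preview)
Your proposal is correct and follows precisely the approach the paper indicates: the paper's own proof is just a reference to \cite{Br75}, noting that Mather's method proceeds via an infinitesimal criterion for triviality established by integrating an ODE, and that this carries over from the $C^\infty$ to the analytic setting. Your sketch of the infinitesimal versality criterion $K\<bx = J_f + K\cdot 1 + \sum_j K\cdot\dot F_j$ together with the homotopy method is exactly this argument spelled out, so there is nothing to compare.
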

\begin{proof}
For a proof in the $C^\infty$-case see \cite{Br75}. It uses the existence and uniqueness of the solution of an ODE (together with the smooth dependence of the initial conditions) to prove
an infinitesimal characterization of local triviality (similar to that in \cite[Theorem 1.2.22]{GLS07}). The theorem holds thus for the real and complex analytic case as well.
\end{proof}

\begin{proof}[Proof of Theorem \ref{thm.ansplit}] As in the formal case we can assume that
  $ f(\bx)=q + g$, where
  $$q(x_1,...,x_k) = a_1x_1^2+\ldots+a_kx_k^2, \ a_i \in K \smallsetminus \{0\},$$ 
(in fact, $a_i=1$ if $K=\C$ and $a_i= \pm 1$ if $K=\R$)  and $g(x_1,...,x_n) \in \fm^3$. 
Setting $g'(x_1,...,x_k):= g(x_1,...,x_k,0,...,0)$ we get
 $g(x_{1},\dots,x_n)=g'(x_1,...,x_k) + h(x_1,...,x_n)$, with 
 $h= \sum_{i=k+1}^n x_ih_i(x_1,...,x_n)$. Then $f = f'+ h,$ $h \in \fm^3$, 
  $$f'(x_1,...,x_k):=  f(x_1,...,x_k,0,...,0) =q(x_1,...,x_k) + g'(x_1,...,x_k).$$
 $q$ has Milnor number 1 and hence is 2-determined by
Corollary \ref{cor.deter}.
Since $g' \in \fm^3$, it follows that $f'$
 is right equivalent to $q$ by an automorphism $\varphi$ of $K\langle x_1,...,x_k\rangle$. Setting $\varphi_i(x_1,...,x_k) := \varphi(x_i)$,
we have
 $$\varphi(f') = q(\varphi_1,...,\varphi_k)+g'(\varphi_1,...,\varphi_k) = q(x_1,...,x_k).$$
 Now define the automorphism  $\psi$ of $K\langle x_1,...,x_n\rangle$ by $x_i\mapsto \varphi_i(x_1,...,x_k)$ for
  $i=1,\dots,k$, and \mbox{$x_i\mapsto x_i$} for $i>k$. Then 
  $\psi(f') = \varphi(f') =q$ and
 we have
  $$\psi(f) = \psi(f') + \psi(h) = q +
  \sum_{i=k+1}^n x_ih_i(\varphi_1,...,\varphi_k,x_{k+1},...,x_n). $$
 Thus, after applying $\psi$, we may assume that $f= q + h$,
 $h= \sum_{i=k+1}^n x_ih_i(x_1,...,x_n).$
  That is $f(x_1,...,x_k,0,...,0) = q$, in other words, $f$ is an unfolding of $q$.
  
  Since $\langle \frac{\partial q}{\partial x_1},..., \frac{\partial q}{\partial x_k}\rangle =\langle x_1,...,x_k\rangle$, $q$ is a miniversal unfolding of itself by Theorem \ref{thm.mather} and    $f$ is isomorphic to the $(n-k)$-parameter constant unfolding of $q$.
 Hence this constant unfolding of $q$ can be induced from $f$ by an isomorphism $(\Phi, \alpha)$ from $q$ to $f$.  By Definition  \ref{def.morph} this means that $\Phi$  is an automorphism of $K\langle x_1,...,x_n\rangle$, $\Phi(x_i)(x_1,...,x_k,0,...,0) = x_i$ for $i=1,...,k$, and $\Phi(x_i) \in K\langle x_{k+1},...,x_n\rangle, \Phi(x_i)(0)=0$, for $i=k+1,...,n$
  such that   $q = \Phi (f) + \alpha(x_{k+1},...,x_n).$  The uniqueness of $g = -\alpha$ follows from Theorem \ref{thm.fsplitne2}.
  This proves the theorem.
 \end{proof}

 \bigskip
 
\section{Splitting Lemma in Characteristic 2}
In this section let $K$ be a field of characteristic 2.
We have the following classification of quadratic forms in characteristic 2, due to C. Arf in \cite[Satz 2]{Ar41}. 
\begin{Theorem}\label{thm.quad2}
Let $f= \sum_{1\le i\le j\le n} a_{ij}x_ix_j \in K[x_1,...,x_n]$ be a quadratic form. Then $f$ is equivalent  to the following normal form:

$$ \sum_{i \text{ odd, } i=1}^{2l-1}(a_i x_i^2 +  x_ix_{i+1} +
a_{i+1} x_{i+1}^2) + \sum_{i=2l+1}^{n}d_i x_i^2, 
$$
with $a_i, d_i \in K$. Moreover,  the form $\sum_{i=2l+1}^{n}d_i x_i^2$ is uniquely determined by $f$ up to linear coordinate change. The first sum is not unique but  $2l$, the rank of $H(f)$, 
is.
\end{Theorem}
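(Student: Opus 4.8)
The plan is to build everything on the one structural feature peculiar to characteristic $2$: for a quadratic form $f=\sum_{i\le j}a_{ij}x_ix_j$ the Hessian $H(f)$ has off-diagonal entries equal to the cross-coefficients $a_{ij}$ but diagonal entries $2a_{ii}=0$, so $H(f)$ is a symmetric matrix with zero diagonal, that is, an \emph{alternating} matrix; equivalently the polar form $b(v,w):=f(v+w)-f(v)-f(w)$ is alternating and has matrix $H(f)$. First I would record the two facts I need from this: an alternating matrix has even rank, and under an equivalence $\bx\mapsto C\bx$ one has $H(f)\mapsto C^{T}H(f)C$, so $\rank H(f)$ is an equivalence invariant. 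This already explains why the rank must be written $2l$.

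For the existence of the normal form I would induct on $n$ by a symplectic Gram--Schmidt. If $H(f)=0$ there are no cross terms and $f=\sum d_ix_i^2$ is already in normal form with $l=0$. Otherwise some $a_{ij}\neq0$; after renumbering and rescaling one variable I may assume the coefficient of $x_1x_2$ is $1$, and collecting the terms in $x_1,x_2$ I write $f=a_1x_1^2+x_1x_2+a_2x_2^2+x_1L_1+x_2L_2+Q'$ with $L_1,L_2$ linear and $Q'$ quadratic in $x_3,\dots,x_n$. The key characteristic-$2$ point is that the squares $a_1x_1^2,a_2x_2^2$ contribute no mixed terms under a translation (their cross terms carry a factor $2=0$); hence after $x_1\mapsto x_1+L_2,\ x_2\mapsto x_2+L_1$ the only new $x_1$- and $x_2$-linear terms come from expanding $x_1x_2$ and exactly cancel $x_1L_1+x_2L_2$, while all remaining quadratic debris lands among $x_3,\dots,x_n$. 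This gives a clean split $f=a_1x_1^2+x_1x_2+a_2x_2^2+Q''(x_3,\dots,x_n)$, and the induction hypothesis applied to $Q''$ assembles both sums of the normal form.

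For uniqueness I would treat the three assertions separately. Since $\rank H(f)$ is an invariant and equals $2l$ for the normal form---each binary block has Hessian $\bigl(\begin{smallmatrix}0&1\\1&0\end{smallmatrix}\bigr)$ of rank $2$ and the diagonal part has zero Hessian---the integer $2l$ is determined by $f$. For the diagonal part I would pass to the radical $R:=\operatorname{rad}(b)=\ker H(f)$, which is canonically attached to $f$ and is carried isomorphically onto the corresponding radical by any equivalence. On $R$ the polar form vanishes, so $f|_R$ is additive and satisfies $f(\lambda v)=\lambda^2 f(v)$; hence in \emph{any} basis of $R$ it is the diagonal form $\sum d_ix_i^2$, and for the normal form $R=\langle x_{2l+1},\dots,x_n\rangle$. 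Thus $f|_R=\sum_{i>2l}d_ix_i^2$ is intrinsic to $f$ and so is determined up to linear coordinate change. Finally the first sum is genuinely non-unique: the substitution $x_1\mapsto x_1+cx_2$ sends $a_1x_1^2+x_1x_2+a_2x_2^2$ to $a_1x_1^2+x_1x_2+(a_2+c+a_1c^2)x_2^2$, so a single hyperbolic block already has a whole family of equivalent normal forms.

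I expect the real obstacle to be the uniqueness of the diagonal part, not the reduction: one must argue intrinsically by identifying the diagonal summand with the restriction $f|_{\operatorname{rad}(b)}$ of $f$ to the canonical radical of the polar form. This is exactly the point at which the non-uniqueness of the hyperbolic part and the uniqueness of the radical part are reconciled---the decomposition of $b$ into a nondegenerate symplectic summand and its radical is canonical, whereas the lift of that symplectic summand to hyperbolic pairs for $f$ is not.
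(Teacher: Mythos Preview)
Your proof is correct and follows essentially the same route as Arf's argument, which the paper cites for its proof and whose outline it reproduces in Remark~\ref{rm.quadform}: one uses the alternating polar form $b(v,w)=f(v+w)-f(v)-f(w)$, splits off hyperbolic pairs by a symplectic Gram--Schmidt, and identifies the diagonal part intrinsically as the restriction of $f$ to the radical $V^*=\ker H(f)$. Your write-up is more self-contained than the paper's (which merely quotes \cite{Ar41} and rescales the $b_i$ to $1$), but the underlying ideas coincide.
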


\begin{proof} Arf proves in \cite[Satz 2]{Ar41}  that $f$ is equivalent to 
$$ \sum_{i \text{ odd, } i=1}^{2l-1}(a_i x_i^2 + b_i x_ix_{i+1} +
c_{i+1} x_{i+1}^2) + \sum_{i=2l+1}^{n}d_i x_i^2, 
$$
with $a_i, b_i, c_i, d_i \in K$, $b_i \ne 0$, together with the uniqueness statements. By setting $x_i \mapsto \frac{1}{b_i}x_i$ for $i$ odd, $i = 1,...,2l-1$, and calling $\frac{a_i}{b_i^2}$ again $a_i$, we get the required form. 
\end{proof}

\begin{Corollary}\label{cor.quad2}
Assume moreover that quadratic equations are solvable in $K$. Then $f$ is equivalent  to one of the following normal forms:
$$
\begin{array}{llll}
(a) & x_1x_2+x_3x_4+\ldots+x_{2l-1}x_{2l}+x_{2l+1}^2, & 1\le  2l+1 \le n,\\
(b)& x_1x_2+x_3x_4+\ldots+x_{2l-1}x_{2l}, & 2 \le 2l \le n.
\end{array}
$$
\end{Corollary}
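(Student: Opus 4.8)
The plan is to start from Arf's normal form (Theorem~\ref{thm.quad2}) and then simplify its two sums independently: each nonsingular binary block $a_i x_i^2 + x_i x_{i+1} + a_{i+1} x_{i+1}^2$ should be brought to the hyperbolic form $x_i x_{i+1}$, and the diagonal tail $\sum_{i=2l+1}^n d_i x_i^2$ should be either consolidated into a single square or made to vanish. Since all the substitutions below involve pairwise disjoint sets of variables, I can treat the blocks and the tail separately and recombine them at the end without interference.

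For a single block $B = a x^2 + x y + b y^2$ I would argue as follows. If $a = b = 0$ there is nothing to do. If $b \ne 0$, apply the shear $y \mapsto y + c x$ (fixing $x$); in characteristic $2$ one computes $B \mapsto (a + c + b c^2)\,x^2 + x y + b y^2$, and since $b \ne 0$ the relation $b c^2 + c + a = 0$ is a genuine quadratic equation in $c$, hence solvable in $K$ by hypothesis. Choosing such a $c$ kills the $x^2$-term and leaves $x y + b y^2$, which the further shear $x \mapsto x + b y$ turns into $x y$. If $b = 0$ but $a \ne 0$, the single shear $y \mapsto y + a x$ already yields $a x^2 + x(y + a x) = x y$. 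Thus every block becomes $x_i x_{i+1}$. I expect this block reduction to be the main point of the argument: it is precisely here that the solvability hypothesis is used, and in fact the quantity $ab$ is the Arf invariant of the block, so that solving $b c^2 + c + a = 0$ amounts to solving an Artin--Schreier equation $t^2 + t = ab$.

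For the tail $S = \sum_{i=2l+1}^n d_i x_i^2$ I would invoke the ``freshman's dream'' in characteristic $2$: since quadratic equations are solvable, every element of $K$ is a square, so each $d_i = e_i^2$ and hence $S = \bigl(\sum_{i=2l+1}^n e_i x_i\bigr)^2 = \ell^2$, where $\ell = \sum_{i=2l+1}^n e_i x_i$ is a linear form in the tail variables. If $\ell \ne 0$ (equivalently, some $d_i \ne 0$), I perform a linear change among $x_{2l+1},\dots,x_n$ after which $\ell$ is a coordinate function; then $S = \ell^2$ becomes $x_{2l+1}^2$, and together with the reduced blocks $f$ is equivalent to normal form (a). If $\ell = 0$, i.e.\ all $d_i = 0$, then $S = 0$ and $f$ is equivalent to normal form (b). Because this last change only mixes the tail variables, it leaves the already-normalized hyperbolic blocks untouched, and the two cases are exhaustive, which completes the proof.
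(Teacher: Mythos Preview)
Your proof is correct and follows essentially the same route as the paper's: the paper reduces each block $a_i x_i^2 + x_i x_{i+1} + a_{i+1} x_{i+1}^2$ to $x_i x_{i+1}$ via the shear $x_i \mapsto x_i + u x_{i+1}$ with $u$ a root of $a_i u^2 + u + a_{i+1} = 0$ followed by $x_{i+1} \mapsto x_{i+1} + a_i x_i$ (the mirror image of your two shears, handling your case split uniformly since the equation is linear when $a_i=0$), and it collapses the tail by scaling each nonzero $d_i x_i^2$ to $x_i^2$ and then using $x_{2l+1}^2 + \cdots + x_{2l+k}^2 = (x_{2l+1}+\cdots+x_{2l+k})^2$, which is the same freshman's-dream manoeuvre as your $\sum d_i x_i^2 = \ell^2$.
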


\begin{proof}
 For each $i$ odd, $i=1,...,2l -1$, we make the following transformations: \\
First $x_i \mapsto x_i + u x_{i+1}$ where $u$ is a solution of $a_i u^2 + u + a_{i+1} =0$, and then apply 
\mbox {$x_{i+1} \mapsto x_{i+1} + a_i x_i$ }.\\
 This transforms $a_i x_i^2 +  x_ix_{i+1} +
a_{i+1} x_{i+1}^2 \mapsto x_ix_{i+1}$. Moreover, let (after renumbering) $d_i\ne 0$ for $i= 2l+1,...,2l+k, \ k\ge 0$, and 0 else.
Setting now
$x_i \mapsto \frac{1}{\sqrt{d_i}}x_i$, if $2l+1 \le i \le 2l+k$, we get for the last sum 
$x_{2l+1}^2+...+ x_{2l+k}^2 = (x_{2l+1}+...+ x_{2l+k})^2$ with $k\ge 0$. We get the normal form (b) if $k=0$. If  $k\ge 1$ we set $ x_{2l+1}= x_{2l+1}+...+ x_{2l+k}$ and get form (a).
\end{proof}

\begin{Remark}\label{rm.quadform}{\em 
Given a quadratic form $f= \sum_{1\le i\le j\le n} a_{ij}x_ix_j \in K[x_1,...,x_n]$, Arf introduces in \cite{Ar41} the 
vector space $V= Kv_1+...+Kv_n$ with a metric defined by the bilinear form 
$v\cdot w = f(v+w)-f(v)-f(w)=\sum_{1 \le i < j \le n} a_{ij} (x_iy_j + x_jy_i)$ 
for the two vectors $v = \sum_i x_iv_i$ and  $w=\sum_i y_iv_i$. He proves then that $V$ decomposes as an orthogonal sum $V' \oplus V^*$ with 
$V^* = \{ w \mid w\cdot v = 0 \, \, \forall \,v \in V\}$ the subspace of vectors that are orthogonal to all vectors of $V$.
$V^*$ decomposes as an orthogonal sum of 1-dimensional subspaces,
$V^* = \oplus_{i=2l+1}^n \langle v_i \rangle $ and $V'$ is the direct sum of 2-dimensional subspaces, 
$V' = \oplus_{i \text{ odd, } i=1}^{2l-1} \langle v_i,v_{i+1} \rangle $ with $v_i \cdot v_{i+1} \ne 0$.
Since $\varphi  (v\cdot w) = \varphi ( v)\varphi (w)$ it
follows that $V^*$ is invariant under any linear coordinate change $\varphi$, while $V'$ is a non-invariant complement of $V^*$ of even  dimension. This proves the normal form of $f$  in Theorem \ref{thm.quad2}  and  that the form  $\sum_{i=2l+1}^{n}d_i x_i^2$  is unique up to equivalence.
}
\end{Remark}

\begin{Lemma}\label{lm.split} 
Let $K$ be a field of characteristic 2. 
The power series
$$ f= \sum_{i \text{ odd, } i=1}^{2l-1}(a_i x_i^2 + b_i x_ix_{i+1} +
a_{i+1} x_{i+1}^2) + \sum_{i=2l+1}^{n}d_i x_i^2 + h(x_1,...,x_n),
$$
in $K[[x_1,...,x_n]]$, with $a_i, b_i, d_i \in K$, $b_i\ne 0$, and $h \in \fm^3$, is right equivalent (by a coordinate change mapping $x_i \mapsto x_i$ for  $i\ge 2l+1$) to
$$ \sum_{i \text{ odd, } i=1}^{2l-1}(a_ix_i^2 + x_ix_{i+1} + a_{i+1} x_{i+1}^2) + \sum_{i=2l+1}^{n}d_i x_i^2 +h'(x_{2l+1},...,x_n),
$$
with 
$h' \in  \langle x_{2l+1},...,x_n\rangle^3$.
\end{Lemma}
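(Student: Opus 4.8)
The plan is to separate the argument into a trivial rescaling and the genuine content, namely the characteristic-$2$ substitute for completing the square. First I would dispose of the coefficients $b_i$: applying the linear change $x_i\mapsto b_i^{-1}x_i$ for each odd $i$ with $1\le i\le 2l-1$ (and $x_i\mapsto x_i$ otherwise) sends $a_ix_i^2+b_ix_ix_{i+1}+a_{i+1}x_{i+1}^2$ to $a_i'x_i^2+x_ix_{i+1}+a_{i+1}x_{i+1}^2$ with $a_i'=a_i/b_i^2$, leaves the forms $d_ix_i^2$ ($i\ge 2l+1$) untouched, and keeps $h$ in $\fm^3$. So I may assume all $b_i=1$ and write $f=Q+h$ with $Q$ the quadratic part and $h\in\fm^3$; the whole remaining task is to remove, by a coordinate change fixing $x_{2l+1},\dots,x_n$, every monomial of $h$ that involves at least one of $x_1,\dots,x_{2l}$.

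The decisive observation concerns the derivatives of $Q$ with respect to $x_1,\dots,x_{2l}$ in characteristic $2$. The pure squares contribute nothing, since $\partial(a_ix_i^2)/\partial x_i=2a_ix_i=0$, and the singular squares $d_ix_i^2$ $(i>2l)$ do not involve these variables; hence for each odd $i\le 2l-1$ one gets $\partial Q/\partial x_i=x_{i+1}$ and $\partial Q/\partial x_{i+1}=x_i$. Thus the $2l$ derivatives $\partial Q/\partial x_1,\dots,\partial Q/\partial x_{2l}$ are, up to the pairing involution $\sigma=(1\,2)(3\,4)\cdots(2l-1\ 2l)$, exactly the variables $x_1,\dots,x_{2l}$, and in particular they generate the ideal $\langle x_1,\dots,x_{2l}\rangle$. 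I would exploit this order by order. Suppose inductively that the parts of $h$ of degrees $3,\dots,d-1$ involve only $x_{2l+1},\dots,x_n$, and let $h_d=h_d^{\mathrm{bad}}+h_d^{\mathrm{good}}$ be its degree-$d$ part, where $h_d^{\mathrm{bad}}\in\langle x_1,\dots,x_{2l}\rangle$ collects the monomials touching the first $2l$ variables. Since the derivatives generate this ideal, I can solve
$$\sum_{j=1}^{2l}\epsilon_j\,\frac{\partial Q}{\partial x_j}=h_d^{\mathrm{bad}}$$
with homogeneous $\epsilon_j$ of degree $d-1$ (concretely $\epsilon_j=r_{\sigma(j)}$ when $h_d^{\mathrm{bad}}=\sum_k x_k r_k$).

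I would then apply the substitution $S:x_j\mapsto x_j+\epsilon_j$ for $j\le 2l$ and $x_i\mapsto x_i$ for $i>2l$. Since $\epsilon_j\in\fm^{d-1}$ with $d\ge 3$, this is a local automorphism fixing $x_{2l+1},\dots,x_n$. The point is that the degree-$d$ part of $S(Q)$ is precisely the linear-in-$\epsilon$ term $\sum_j\epsilon_j\,\partial Q/\partial x_j=h_d^{\mathrm{bad}}$, the remaining contributions (the terms $a_i\epsilon_i^2+\epsilon_i\epsilon_{i+1}+a_{i+1}\epsilon_{i+1}^2$) having degree $\ge 2d-2>d$, while $S(h)$ agrees with $h$ through degree $d$. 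Hence the new degree-$d$ part equals $h_d+h_d^{\mathrm{bad}}=2h_d^{\mathrm{bad}}+h_d^{\mathrm{good}}=h_d^{\mathrm{good}}$, where the essential use of $\characteristic(K)=2$ is that adding $h_d^{\mathrm{bad}}$ annihilates it rather than doubling it. Thus the lowest bad degree strictly increases, and iterating over $d=3,4,\dots$ gives substitutions whose $\epsilon_j$ lie in ever higher powers of $\fm$, so their composition converges in the $\fm$-adic topology to a formal automorphism fixing $x_{2l+1},\dots,x_n$ and carrying $f$ to $Q+h'$ with $h'\in\langle x_{2l+1},\dots,x_n\rangle^3$. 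I expect the only real subtlety, beyond this char-$2$ cancellation, to be the bookkeeping that every correction term genuinely lands in degree $>d$ (so that the induction is well founded and the infinite product of coordinate changes converges); the algebraic heart — that in characteristic $2$ the Jacobian ideal of the regular part is the full maximal ideal in $x_1,\dots,x_{2l}$ — replaces the completion of squares that is unavailable here.
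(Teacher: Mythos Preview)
Your argument is correct and follows essentially the same strategy as the paper's proof: both exploit the mixed terms $x_ix_{i+1}$ of the nondegenerate block to absorb the ``bad'' part of $h$ via substitutions $x_j\mapsto x_j+(\text{the }\sigma\text{-paired coefficient})$, then iterate until the bad part vanishes in the $\fm$-adic limit. The differences are purely organizational: you first rescale to set $b_i=1$ and then proceed one homogeneous degree at a time, while the paper folds the rescaling into the first step and kills the entire current bad tail $x_1g_1+\cdots+x_{2l}g_{2l}$ at once (so the order of the remainder jumps faster); your framing via $\partial Q/\partial x_j=x_{\sigma(j)}$ makes the mechanism more transparent but is the same computation.
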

\begin{proof}
Applying Theorem \ref{thm.quad2} to the 2-jet of $f$ we may assume $b_i=1$.
Write $h =  f_3 + x_1g_1 + ...+x_{2l} g_{2l}$ with $g_i(x_1,...,x_n) \in \fm^2$ and $f_3 \in  \langle x_{2l+1},...,x_n\rangle^3$ (not depending on $x_1,...,x_{2l}$). The coordinate change 
$$
\begin{array}{lll}
x_1 &\mapsto x_1 +g_{2}, &x_{2} \mapsto x_{2} +  g_{1},\\ 
x_3 &\mapsto x_3 +g_{4}, &x_{4} \mapsto x_{4} + g_{3},\\
...\\
x_{2l-1} &\mapsto x_{2l-1} +g_{2l}, \ &x_{2l} \mapsto x_{2l} +  g_{2l-1},\\  
x_i &\mapsto x_i \text{ for } i\ge 2l+1
\end{array}
$$
transforms $f$ to
$$ \sum_{i \text{ odd, } i=1}^{2l-1}(a_ix_i^2 + x_ix_{i+1} + a_{i+1} x_{i+1}^2) + \sum_{i=2l+1}^{n}d_i x_i^2 + f_3+ f_4 + (x_1h_1+...+x_{2l}h_{2l}),
$$
with suitable $h_i \in \fm^3, f_4 \in \langle x_{2l+1},...,x_n\rangle^4$. Now make the coordinate change 
as above, but with 
$h_i$ replacing $g_i$ to get 
$$\sum_{i \text{ odd, } i=1}^{2l-1}(a_ix_i^2 + x_ix_{i+1} + a_{i+1} x_{i+1}^2)  + \sum_{i=2l+1}^n d_{i}x_{i}^2  + f_3 + f_4 + f_6 +(x_1k_1+...+x_{2l}k_{2l})$$
with suitable $k_i \in \fm^5, f_6 \in \langle x_{2l+1},...,x_n\rangle^6$. 
Continuing with $k_i$  instead of $h_i$ in the same manner, the composition of the coordinate changes makes the 
sum in brackets of arbitrary high order and 
converges (in the $\fm$-adic topology) to a formal coordinate change, with the sum in brackets being 0 in the limit. This proves the lemma.
\end{proof}

\begin{Theorem}[Splitting lemma in characteristic 2] \label{thm.split2}
Let $K$ be a field of characteristic 2.  
\begin{enumerate}
\item Let $f\in \mathfrak{m}^2\subset  K[[x_1,\ldots,x_n]]$. Then there exists an $l$, $0\leq 2l\leq n$, such that $f$ is right equivalent to 
$$ \sum_{i \text{ odd, } i=1}^{2l-1}(a_i x_i^2 + x_ix_{i+1} +
a_{i+1} x_{i+1}^2) + \sum_{i=2l+1}^{n}d_i x_i^2 + h(x_{2l+1},...,x_n),
$$
with  $a_i, d_i \in K$, $h\in \langle x_{2l+1},\ldots,x_{n} \rangle^3$. 
$2l$ is the rank of the Hessian matrix of $f$ at 0. The series
$g:=  \sum_{i=2l+1}^{n}d_i x_i^2  + h(x_{2l+1},\ldots,x_{n})$ is called the {\em residual part} of $f$, it is uniquely determined up to right equivalence in $K[[x_{2l+1},\ldots,x_{n}]]$.
\item An analogous statement holds in the following cases (char$(K)=2$):\\
 (a) Let $K$ be a  
 real valued field  and let $f\in K\<bx$ have an isolated singularity. 
 Then $g \in K\langle x_{k+1},...,x_n \rangle$ and $g$ is unique up to right equivalence in $K\langle x_{k+1},...,x_n \rangle$.
 \\
 (b) Let $K$ be any field and $f \in K\<bx^{alg}$ an algebraic power series. 
 Then $g \in K\langle x_{k+1},...,x_n \rangle^{alg}$ and $g$ is unique up to right equivalence in $K\langle x_{k+1},...,x_n \rangle^{alg}$.
\end{enumerate}
\end{Theorem}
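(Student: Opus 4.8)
The plan is to settle the formal statement (part 1) first and then carry it over to the algebraic and convergent settings by approximation, exactly as was done for $\characteristic K\ne2$ in Theorems \ref{thm.fsplitne2} and \ref{thm.asplitne2}. For the \emph{existence} in part 1, I would apply the linear change of coordinates supplied by Arf's classification (Theorem \ref{thm.quad2}) to bring the $2$-jet of $f$ into the normal form $Q+D$, where $Q=\sum_{i\text{ odd}}(a_ix_i^2+x_ix_{i+1}+a_{i+1}x_{i+1}^2)$ and $D=\sum_{i>2l}d_ix_i^2$; since each hyperbolic pair contributes a rank-$2$ block to the Hessian while the squares contribute nothing in characteristic $2$, this already gives $2l=\rank H(f)$. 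Writing $f=Q+D+h$ with $h\in\fm^3$ and applying Lemma \ref{lm.split} (with $b_i=1$, which fixes $x_{2l+1},\dots,x_n$) pushes all monomials divisible by some $x_i$, $i\le 2l$, into the pairs, leaving the residual part $g=D+h'$ with $h'\in\langle x_{2l+1},\dots,x_n\rangle^3$, as required.

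The core of the theorem is the uniqueness of $g$, which I would prove in the spirit of Theorem \ref{thm.fsplitne2}(1), adapted to characteristic $2$. Suppose $\varphi$ is an automorphism with $\varphi(Q_0+g_0)=Q_1+g_1$, where $g_0,g_1\in K[[x_{2l+1},\dots,x_n]]$, and write $\varphi(x_i)=l_i+k_i$ with $l_i$ linear and $k_i\in\fm^2$. The decisive difference from odd characteristic is that the pure squares have vanishing derivatives, so the usable gradient comes from the mixed terms: $\partial Q/\partial x_i=x_{i+1}$ and $\partial Q/\partial x_{i+1}=x_i$, i.e. $\nabla Q$ swaps partners and still generates $\langle x_1,\dots,x_{2l}\rangle$. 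A direct expansion in characteristic $2$ yields $Q_0(\varphi_1,\dots,\varphi_{2l})=Q_0(l_1,\dots,l_{2l})+\sum_{\text{pairs}}(k_iF_i+k_{i+1}F_{i+1})$ with $F_i:=l_{i+1}+a_ik_i+k_{i+1}$ and $F_{i+1}:=l_i+a_{i+1}k_{i+1}$, whose linear parts $l_{i+1},l_i$ are a permutation of $l_1,\dots,l_{2l}$. I would then solve $F_s(\psi_1,\dots,\psi_{2l},x_{2l+1},\dots,x_n)=0$ by the implicit function theorem (Theorem \ref{thm.implicit}; its determinant is nonzero precisely because the $F_s$ permute a basis), set $\varphi'(x_j):=\varphi_j(\psi,x)$ for $j>2l$, and read off $g_0(\varphi'_{2l+1},\dots,\varphi'_n)=g_1$. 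That $\varphi'$ is an automorphism of $K\langle x_{2l+1},\dots,x_n\rangle$ follows as in the odd case from $(F_1,\dots,F_{2l},\varphi_{2l+1},\dots,\varphi_n)$ being an automorphism, whose generators therefore cut out $\fm$.

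The step I expect to be the main obstacle, and the genuinely new feature in characteristic $2$, is the residual quadratic part $D$, which is absent when $\characteristic K\ne2$ (there the residual lies in $\fm^3$): because $\partial D/\partial x_j=0$, a linear change could a priori feed $x_{2l+1},\dots,x_n$ back into $x_1,\dots,x_{2l}$ through $D$ and spoil the cancellation $Q_0(l_1(\psi),\dots,l_{2l}(\psi))=Q_1(\psi)$ that the substitution above needs. The observation that removes this is that the linear part $\Lambda$ of $\varphi$ is forced to be block diagonal with respect to $(x_1,\dots,x_{2l})$ and $(x_{2l+1},\dots,x_n)$. Indeed, the $2$-jet identity reads $\phi_0\circ\Lambda=\phi_1$ for the $2$-jets $\phi_0,\phi_1$, so $\Lambda$ is an isometry and maps the radical of the associated bilinear form onto the radical; by Remark \ref{rm.quadform} this radical is $\langle x_{2l+1},\dots,x_n\rangle$, giving $l_j\in\langle x_{2l+1},\dots,x_n\rangle$ for $j>2l$. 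Differentiating $\phi_0\circ\Lambda=\phi_1$ shows that the Jacobian ideal $\langle x_1,\dots,x_{2l}\rangle$ of $\phi_1$ is contained in $\langle l_1,\dots,l_{2l}\rangle$, whence by a dimension count $l_i\in\langle x_1,\dots,x_{2l}\rangle$ for $i\le 2l$. Block diagonality then gives $Q_1=Q_0\circ\Lambda'$, where $\Lambda'$ is the restriction of $\Lambda$ to the first $2l$ coordinates, which is exactly the identity that makes the substitution argument close; note in particular that I need neither pre-match the $2$-jets nor invoke any delicate characteristic-$2$ classification of the nondegenerate part.

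Finally, part 2 follows the pattern of Theorem \ref{thm.asplitne2}. For (b) the formal splitting and formal uniqueness of part 1, combined with the nested Artin approximation theorem (Theorem \ref{thm.nestapprox}, whose nesting is adapted to the residual depending only on $x_{2l+1},\dots,x_n$), produce an algebraic coordinate change and an algebraic residual agreeing with the formal data modulo $\fm^4$; this forces the approximant to be an automorphism and transports uniqueness to $K\langle x_{2l+1},\dots,x_n\rangle^{alg}$. For (a), an isolated singularity means $\mu(f)<\infty$, so by Corollary \ref{cor.deter} $f$ is right equivalent to a polynomial, hence to an algebraic power series, which by Lemma \ref{lm.alg} is convergent; applying (b) together with Lemma \ref{lm.alg} once more yields the convergent splitting and the uniqueness of $g$ in $K\langle x_{2l+1},\dots,x_n\rangle$.
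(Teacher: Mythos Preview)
Your overall plan coincides with the paper's: existence via Arf's classification plus Lemma~\ref{lm.split}, uniqueness via the implicit function theorem, and parts 2(a),(b) via finite determinacy and nested Artin approximation. The existence argument and the reductions in part~2 are correct and essentially identical to the paper.

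The uniqueness argument, however, has a genuine gap in the handling of the diagonal part $D=\sum_{j>2l}d_jx_j^2$. Your block--diagonality claim is wrong: preservation of the radical $V^\ast=\langle e_{2l+1},\dots,e_n\rangle$ under $\Lambda$ yields $l_i\in\langle x_1,\dots,x_{2l}\rangle$ for $i\le 2l$ (the \emph{other} block), not $l_j\in\langle x_{2l+1},\dots,x_n\rangle$ for $j>2l$. A counterexample for $Q+D=x_1x_2+x_3^2$ in characteristic~$2$ is $l_1=x_1$, $l_2=x_2+a^2x_1$, $l_3=x_3+ax_1$ ($a\ne 0$): this is an isometry with $l_3\notin\langle x_3\rangle$, and here $Q_0\circ\Lambda'=x_1x_2+a^2x_1^2\ne Q_1$. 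Consequently, after your substitution you obtain not $g_0(\varphi')=g_1$ but $g_0(\varphi')=g_1+\sum_{j>2l}d_j\,m_j(\psi)^2$, where $m_j$ is the $\langle x_1,\dots,x_{2l}\rangle$--part of $l_j$. Related to this, your functions $F_s$ encode only the order $\ge 3$ contribution of $Q_0(\varphi)$ and ignore the extra term $\sum_{j>2l}d_jk_j^2$ produced by $D(\varphi_{2l+1},\dots,\varphi_n)=D(l_{>2l})+\sum d_jk_j^2$.

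The paper closes exactly this gap by two devices you omit. First, it invokes the uniqueness part of Arf's theorem to normalise the linear parts $l_{2l+1},\dots,l_n$ after a linear change among $x_{2l+1},\dots,x_n$. Second---and this is the decisive trick in characteristic~$2$---it absorbs the stray diagonal contribution into the implicit--function system by setting $F_1:=l_1+a_2k_2+\sum_{j>2l}d_jk_j^2$ (the added term is of order $\ge 4$, so the Jacobian at~$0$ is unchanged). With this choice, solving $F_1=\dots=F_{2l}=0$ kills the $Q$--cross terms \emph{and} the $D$--term simultaneously, yielding $g_0(\varphi')=g_1$ exactly. Your proposal needs one of these mechanisms (or an equivalent one) to go through.
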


\begin{proof}
The existence of the splitting  follows  from Theorem \ref{thm.quad2} (applied to the 2-jet of $f$) and Lemma \ref{lm.split}. 
By uniqueness of $g$ we mean that if 
$$f_0 = \sum_{i \text{ odd, } i=1}^{2l-1}(a_i x_i^2 + x_ix_{i+1} +
a_{i+1} x_{i+1}^2) + g_0 \rsim f_1 = \sum_{i \text{ odd, } i=1}^{2l-1}(a_i x_i^2 + x_ix_{i+1} + a_{i+1} x_{i+1}^2) + g_1$$
then 
$g_0(x_{2l+1},\ldots,x_n)\rsim g_1(x_{2l+1},\ldots,x_n)$, with
$$g_j = \sum_{i=2l+1}^{n}d_i x_i^2 + h_j, \ h_j\in \langle x_{2l+1},\ldots,x_{n} \rangle^3,  j=0,1.$$

\noindent To see the uniqueness,
let $\varphi$  be a coordinate change given by  
$$\varphi(x_i) = \varphi_i(\bx) = l_i(\bx)+k_i(\bx), \ i=1,...,n,$$ 
with $k_i \in \fm^2$ and $l_i$ linear forms with $\det \big(\frac{\partial l_i}{\partial x_j}\big) \ne 0$, such that
$\varphi (f_0)= f_1$. This means
\begin{align}
\tag{*}\label{*}
\begin{split}
 &\sum_{i \text{ odd, } i=1}^{2l-1}(a_i \varphi_i^2 + \varphi_i\varphi_{i+1} +
a_{i+1} \varphi_{i+1}^2) +  \sum_{i=2l+1}^{n}d_i \varphi_i^2 + \ h_0(\varphi_{2l+1},\ldots,\varphi_n) =\\ 
& \sum_{i \text{ odd, } i=1}^{2l-1}(a_i x_i^2 + x_ix_{i+1} +
a_{i+1} x_{i+1}^2) + \sum_{i=2l+1}^{n}d_i x_i^2 + \ h_1(x_{2l+1},\ldots,x_n).
\end{split}
\end{align}
By the uniqueness statement in Theorem \ref{thm.quad2} we may assume,
after a linear coordinate change among the variables  $x_{2l+1},...,x_n$, that $l_{2l+1} = x_{2l+1},...,l_{n} = x_{n}$. Hence
\begin{center}
$\sum_{i=2l+1}^{n}d_i \varphi_i^2 = \sum_{i=2l+1}^{n}(d_i x_i^2 + d_ik_i^2).$
\end{center}
\noindent 
Comparing the terms of  order $\ge 3$ and setting 
$$K_i := k_i(a_ik_i +l_{i+1}+k_{i+1}) + k_{i+1}(l_i+a_{i+1}k_{i+1})$$
 we get
 $$\sum_{i \text{ odd, } i=1}^{2l-1} K_i + \sum_{i=2l+1}^{n}d_i k_i^2 \ +  \ h_0(\varphi_{2l+1},\ldots,\varphi_n)
 =  h_1(x_{2l+1},\ldots,x_n).$$
We define for $i=1,...,2l$, 
$$
\begin{array}{llll} 
&  F_i &:= l_i +a_{i+1}k_{i+1}, &\text{ if } i  \text{ is odd}\\
&  F_i &:= l_i + k_i  + a_{i-1}k_{i-1} &\text{ if } i \text{ is even}.\\
\end{array}
$$ 
Assume now that there are $\psi_1,...,\psi_{2l} \in K\langle x_{2l+1},\ldots,x_n\rangle$ satisfying 
$$F_i (\psi_1,...,\psi_{2l}, x_{2l+1},...,x_n)=0, \ i = 1,...,2l \text{ and } \psi(0)=0.$$
Then we  replace $(x_1,...,x_{2l})$ by $(\psi_1,...,\psi_{2l})$ 
and define the endomorphism $\varphi'$ of $K\langle x_{2l+1},...,x_n\rangle$
by
 $$\varphi'(x_i):= \varphi_i'(x_{2l+1},\ldots,x_n) := 
 \varphi_i(\psi_1,...,\psi_{2l},x_{2l+1},\ldots,x_n), \ i = 2l+1,...,n.$$ 
This kills  
$\sum_{i \text{ odd, } i=1}^{2l-1} K_i 
$
and we get for the terms of order $\ge 3$
$$\sum_{i=2l+1}^{n}d_i k_i^2+h_0(\varphi'_{2l+1},\ldots,\varphi'_n) =
h_1(x_{2l+1},\ldots,x_n)$$
 and thus $g_0(\varphi'_{2l+1},\ldots,\varphi'_n) =
g_1(x_{2l+1},\ldots,x_n)$ by adding $\sum_{i=2l+1}^{n}d_i x_i^2$ on both sides.
\medskip

 To prove the theorem, we have still to show that the endomorphism 
 $\psi$ exists and that $\varphi'$ is an automorphism of  $K[[x_{2l+1},...,x_n]]$.\\
Note that the map $(l_1,\varphi_2, l_3,\varphi_4, ..., l_{2l-1},\varphi_{2l}, 
 \varphi_{2l+1},...,\varphi_{n})$ is an automorphism of \mbox{$K[[x_1,...,x_n]]$} since $(\varphi_1,...,\varphi_n)$ is an automorphism\footnote{We use the following characterization of an automorphism. Let\\ $\lambda=(\lambda_1,\ldots,\lambda_n):K[[x]]\longrightarrow K[[x]]$ be an endomorphism. The following conditions are equivalent:
 \begin{enumerate}
 \item $\lambda$ is an automorphism,
 \item $ \det \big ( \frac{\partial \lambda_i(0)}{\partial x_j}\big )_{i,j = 1,...,n} \ne 0$,
  \item  $\langle\lambda_1,\ldots,\lambda_n\rangle=\langle x_1,\ldots,x_n\rangle$.
  \end{enumerate}}
  and since $l_i$ is the linear part of $\varphi_i$.
 It follows $\langle l_1,\varphi_2, l_3,\varphi_4, ..., l_{2l-1},\varphi_{2l},  \varphi_{2l+1},...,\varphi_{n} \rangle = \langle x_{1} ,...,x_{n}\rangle$ and if we  replace $x_i$ by $\psi_i(x_{2l+1},...,x_n)$ for $i=1,...,2l$, we get 
  $\langle \varphi'_{2l+1},...,\varphi'_{n} \rangle = \langle x_{2l+1} ,...,x_{n}\rangle$ since $l_i(\psi)\in\langle x_{2l+1},\ldots,x_n\rangle^2$. This shows that $\varphi'$ is an automorphism of $K[[x_{2l+1},...,x_n]]$.
  
  To show the existence of $\psi$  we  apply the  implicit function theorem (Theorem \ref{thm.implicit})  to $F_1,...,F_{2l}$
  considered as elements of $K[[x_{2l+1},\ldots,x_n]][[x_1,\ldots,x_{2l}]]$. To do this, we must show $\det \big ( \frac{\partial F_i}{\partial x_j} (0,0)\big )_{i,j = 1,...,2l}
  \ne 0$. Since $k_i\in \fm^2$ we have 
  $$\frac{\partial F_i}{\partial x_j}(0,0)=\frac{\partial l_i}{\partial x_j}(0,0)=\frac{\partial \varphi_i}{\partial x_j}(0,0).$$
Comparing the quadratic terms of (\ref{*}), we get
 $$\ell:=\sum_{i \text{ odd, } i=1}^{2l-1}(a_i l_i^2 + l_ili_{i+1} +
a_{i+1} l_{i+1}^2 ) =
 \sum_{i \text{ odd, } i=1}^{2l-1}(a_i x_i^2 + x_ix_{i+1} +
a_{i+1} x_{i+1}^2),$$
 and $\frac{\partial \ell}{\partial x_i} = x_{i+1}$ if $i$ is odd and 
  $\frac{\partial \ell}{\partial x_i} = x_{i-1}$ if $i$ is even for $i \le 2l$. 
  Since $\frac{\partial \ell}{\partial x_i} \in \langle l_{1} ,...,l_{2l}\rangle$,
  it follows that $\langle x_{1} ,...,x_{2l}\rangle \subset \langle l_{1} ,...,l_{2l}\rangle$ and hence, with $l'_i(x_1,...,x_{2l}) 
  := l_i (x_{1} ,...,x_{2l},0,...,0)$, 
  we get $\langle x_{1} ,...,x_{2l}\rangle = \langle l'_{1} ,...,l'_{2l}\rangle$. Hence $$\det \big ( \frac{\partial F_i(0,0)}{\partial x_j}\big )_{i,j = 1,...,2l}=\det \big ( \frac{\partial l_i(0,0)}{\partial x_j}\big )_{i,j = 1,...,2l} = \det \big ( \frac{\partial l'_i(0,0)}{\partial x_j}\big )_{i,j = 1,...,2l} \ne 0.$$
  Now we apply the  implicit function theorem (Theorem \ref{thm.implicit}) and obtain 
  $\psi_1,...,\psi_{2l} \in K\langle x_{2l+1},\ldots,x_n\rangle$ satisfying 
$$F_i (\psi_1,...,\psi_{2l}, x_{2l+1},...,x_n)=0, \ i = 1,...,2l \text{ and } \psi(0)=0.$$
  To prove 2.(a) note that if $f$ has an isolated singularity then $f$ is right equivalent (by a convergent coordinate change) to a polynomial, which is an algebraic powers series. This reduces  the existence of a splitting (by Lemma \ref{lm.alg}) to the case to 2.(b). The uniqueness of the residual part follows from Remark  \ref{rm.unique}.
  
  To prove 2.(b) we use 1. and get
  $$f(\bar\varphi)= \sum_{i \text{ odd, } i=1}^{2l-1}(a_i x_i^2 + x_ix_{i+1} +
a_{i+1} x_{i+1}^2) + \sum_{i=2l+1}^{n}d_i x_i^2 + \bar h(x_{2l+1},...,x_n),
$$ with $\bar\varphi=(\bar\varphi_1,\ldots,\bar\varphi_n)$ an automorphism of $K[[x_1,\ldots,x_n]]$ and $\bar h\in 
\frak{m}^3\cap K[[x_{2l+1},\ldots,x_n]]$. 
Theorem \ref{thm.nestapprox} implies the existence of 
$\varphi\in (K\langle x_1,\ldots,x_n\rangle^{alg})^n$ and $h\in K\langle x_{2l+1},\ldots,x_n\rangle^{alg}$ such that
 $$f(\varphi)= \sum_{i \text{ odd, } i=1}^{2l-1}(a_i x_i^2 + x_ix_{i+1} + a_{i+1} x_{i+1}^2) + \sum_{i=2l+1}^{n}d_i x_i^2 +  h(x_{2l+1},...,x_n) $$ 
and 
$$\bar\varphi\equiv \varphi \text{ mod } \frak{m}^4 \text{ and } \bar h\equiv h\text{ mod }\frak{m}^4.$$
This implies that $h\in\frak{m}^3$ and $\varphi$ defines an automorphism of $K\langle x_1,\ldots,x_n\rangle^{alg}$. 
Now assume given $f_j= \sum_{i \text{ odd, } i=1}^{2l-1}(a_i x_i^2 + x_ix_{i+1} + a_{i+1} x_{i+1}^2) + \sum_{i=2l+1}^{n}d_i x_i^2 +  h_j(x_{2l+1},...,x_n)$, $j=1,2$ 
such that $f_1\sim f_2$
in $K\langle x_1,\ldots,x_n\rangle^{alg}$. 
The first part of the Theorem implies that $h_1$ and $h_2$ are right equivalent in
$K[[x_{2l+1},\ldots,x_n]]$. From Theorem \ref{thm.nestapprox} we deduce that they are also right equivalent in
$K\langle x_{2l+1},\ldots,x_n\rangle^{alg}$.
\end{proof}

\begin{Remark} \label{rm.unique} {\em As in Theorem \ref{thm.fsplitne2}, the uniqueness of the residual part $g$ in Theorem \ref{thm.split2} (1) does not only hold for  $K[[\bx]]$ but for general  $K\<bx$, $K$ a real valued field of characteristic 2:\\
Let $f_0, f_1\in\fm^2\subset K\<bx$ and assume that 
$$f_0 = q(x_1,...,x_{2l})+g_0(x_{2l+1},\dots,x_n) 
\rsim f_1 = q(x_1,...,x_{2l})+g_1(x_{2l+1},\dots,x_n)$$
with 
$$ q=\sum_{i \text{ odd, } i=1}^{2l-1}(a_i x_i^2 + x_ix_{i+1} +
a_{i+1} x_{i+1}^2), \ a_i \in K,$$ and
$$g_j = \sum_{i=2l+1}^{n}d_i x_i^2 + h_j, \ h_j\in \langle x_{2l+1},\ldots,x_{n} \rangle^3, \ d_i \ne0, \ j=0,1.$$
Then $g_0 \rsim g_1$ in $K\langle x_{2l+1},...,x_n\rangle$.
   
 This follows because we use in the proof of Theorem \ref{thm.split2} (1) only the implicit function theorem, which holds for $K\<bx$.}
 \end {Remark}

Together with Corollary \ref{cor.quad2} we get from Theorem \ref{thm.split2}:
\begin{Corollary} \label{cor1.5.5}\index{splitting lemma}
Assume moreover that quadratic equations are solvable in $K$. Then there exists an $l$, $0\leq 2l\leq n$, such that $f \in \fm^2 \subset K[[\bx]]$ is right equivalent to one of the normal forms, with $g$ unique up to right equivalence:
$$
\begin{array}{llll}
(a) & x_1x_2+x_3x_4+\ldots+x_{2l-1}x_{2l}+x_{2l+1}^2 &+g(x_{2l+1},\ldots,x_{n}), & 1\le  2l+1 \le n,\\
(b)& x_1x_2+x_3x_4+\ldots+x_{2l-1}x_{2l} &+ g(x_{2l+1},\ldots,x_{n}),  & 2 \le 2l \le n.
\end{array}
$$
Moreover, statement 2. of Theorem \ref{thm.split2} holds analogously.
\end{Corollary}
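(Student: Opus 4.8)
The plan is to combine the splitting produced by Theorem~\ref{thm.split2} with the explicit quadratic-form normalizations carried out in the proof of Corollary~\ref{cor.quad2}, the latter being available precisely because quadratic equations are solvable in $K$. First I would apply Theorem~\ref{thm.split2}(1) to put $f$ into the form
$$\sum_{i \text{ odd, } i=1}^{2l-1}\bigl(a_i x_i^2 + x_i x_{i+1} + a_{i+1} x_{i+1}^2\bigr) + \sum_{i=2l+1}^{n} d_i x_i^2 + h(x_{2l+1},\ldots,x_n),$$
with $h\in\langle x_{2l+1},\ldots,x_n\rangle^3$, where $2l=\rank H(f)$ and where the residual part is unique up to right equivalence in $K[[x_{2l+1},\ldots,x_n]]$. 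The decisive structural feature is that the two summations involve \emph{disjoint} sets of variables: the hyperbolic blocks use only $x_1,\ldots,x_{2l}$ and the residual part only $x_{2l+1},\ldots,x_n$.

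Next I would normalize the two parts independently, using exactly the substitutions from Corollary~\ref{cor.quad2}. For each odd $i\le 2l-1$, the change $x_i\mapsto x_i+ux_{i+1}$ with $u$ a root of $a_iu^2+u+a_{i+1}=0$ (which exists by solvability of quadratic equations), followed by $x_{i+1}\mapsto x_{i+1}+a_ix_i$, turns the block into $x_ix_{i+1}$; as these act only on $x_1,\ldots,x_{2l}$ they leave the residual part untouched. For the residual part, after renumbering so that $d_i\neq0$ for $2l+1\le i\le 2l+k$ and $d_i=0$ otherwise, the changes $x_i\mapsto d_i^{-1/2}x_i$ (using solvability of $u^2=d_i$) turn $\sum d_ix_i^2$ into $x_{2l+1}^2+\cdots+x_{2l+k}^2=(x_{2l+1}+\cdots+x_{2l+k})^2$ in characteristic $2$; a final substitution $x_{2l+1}\mapsto x_{2l+1}+\cdots+x_{2l+k}$ collapses this to $x_{2l+1}^2$ when $k\ge1$, giving form (a), while $k=0$ gives form (b). All of these act only on $x_{2l+1},\ldots,x_n$, hence are right equivalences inside $K[[x_{2l+1},\ldots,x_n]]$ and carry $h$ to a series $g$ that is again of order $\ge3$ in these variables.

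For uniqueness I would invoke the uniqueness of the residual part in Theorem~\ref{thm.split2}: that part is determined up to right equivalence in $K[[x_{2l+1},\ldots,x_n]]$, and the normalizations above are themselves such right equivalences, so the normalized residual part---namely $x_{2l+1}^2+g$ in case (a) and $g$ in case (b)---is determined up to right equivalence as well. The dichotomy (a) versus (b) is not a choice but an invariant: it records whether the $V^*$-part $\sum_{i\ge 2l+1}d_ix_i^2$ of the $2$-jet (in the sense of Remark~\ref{rm.quadform}) is, after the characteristic-$2$ collapse, a single square or zero, and by Arf's uniqueness statement this form is invariant under coordinate change.

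The only place that requires genuine care is the bookkeeping of variable ranges, ensuring that the block normalizations and the residual normalizations never interfere. Concretely, I must check that collapsing $x_{2l+1}^2+\cdots+x_{2l+k}^2$ reintroduces no degree-$2$ term other than $x_{2l+1}^2$ and keeps $g$ of order $\ge3$, and that no substitution used on $x_{2l+1},\ldots,x_n$ disturbs the already-normalized hyperbolic blocks in $x_1,\ldots,x_{2l}$ (which is automatic since the two variable sets are disjoint). Finally, statement 2 follows with no new idea: one replaces Theorem~\ref{thm.split2}(1) by statement 2 of Theorem~\ref{thm.split2}, observes that all extra substitutions are linear and hence algebraic and convergent, and appeals to Theorem~\ref{thm.nestapprox} for uniqueness in $K\langle x_{2l+1},\ldots,x_n\rangle^{alg}$; the convergent isolated-singularity case reduces to the algebraic one exactly as in the proof of Theorem~\ref{thm.split2}.
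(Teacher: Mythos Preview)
Your proposal is correct and follows the same route the paper indicates: apply Theorem~\ref{thm.split2} to split off the residual part, then use the linear substitutions of Corollary~\ref{cor.quad2} separately on the hyperbolic blocks (variables $x_1,\ldots,x_{2l}$) and on the diagonal piece of the residual part (variables $x_{2l+1},\ldots,x_n$), noting that these two groups of substitutions do not interfere. The paper's own proof is just the one-line reference ``Together with Corollary~\ref{cor.quad2} we get from Theorem~\ref{thm.split2}'', so you have simply spelled out the intended argument in detail; your treatment of uniqueness and of statement~2 (linearity of the extra substitutions, reduction via Theorem~\ref{thm.nestapprox}) is likewise what the paper has in mind.
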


{\bf Acknowledgement:} We would like to thank Thomas Preu for his questions and the anonymous referee for carefully reading the manuscript, both of which contributed to improving the presentation.

\bigskip
\noindent Gerhard Pfister\\
Department of Mathematics,
Rheinland-Pfälzische Technische Universität\\ Kaiserslautern-Landau (RPTU), Germany\\
Email address: pfister@mathematik.uni-kl.de

\medskip
\noindent Gert-Martin Greuel\\
Department of Mathematics,
Rheinland-Pfälzische Technische Universität\\ Kaiserslautern-Landau (RPTU), Germany\\
Email address: greuel@mathematik.uni-kl.de

\begin{thebibliography}{BdPW87}

\bibitem[A75]{A75} Andr\'e, M.:
{\it Artins's Theorem on the Solutions of Analytic Equations in Positive Characteristic.} 
manuscripta math. 15, 341-347 (1975). 

\bibitem[Ar41]{Ar41} Arf, Cahit:
{\it Untersuchungen über quadratische Formen in Körpern der Charakteristik 2. (Tl. 1.).} 
J. Reine Angew. Math. 183, 148-167 (1941). 

\bibitem[A68]{A68} Artin, M.:
{\it On the Solutions of Analytic Equations.} 
Inventiones math. 5, 277-291 (1968). 

\bibitem[A69]{A69} Artin, M.:
{\it Algebraic Approximation of Structures over complete local Rings.} 
Publ.  Math.IHES 36, 23-58 (1969). 

\bibitem[BGM12] {BGM12}   Boubakri, Y.; Greuel, G.-M.; Markwig, T.: 
\emph{Invariants of hypersurface singularities in positive characteristic}, Rev. Mat. Complut. 25, 61--85  (2012).

\bibitem[Br75] {Br75}  Br\"ocker, Th.:
{\it Differentiable germs and catastrophes.} 
London Mathematical Society Lecture Note Series. 17. London etc.: Cambridge University Press. VI, 179 p. (1975). 

\bibitem[dJP00]{dJP00} de Jong, Theo; Pfister, Gerhard:
{\it Local analytic geometry. Basic theory and applications.}
Advanced Lectures in Mathematics. Braunschweig: Vieweg. xii, 382 p. (2000). 

\bibitem[G71]{G71} Gabrielov, A. M.:
{\it The formal relations between analytic functions.} 
Funkcional. Anal. i Prilovzen 5, 64-65 (1971). 

\bibitem[GR71]{GR71} Grauert, Hans; Remmert, Reinhold:
{\it Analytische Stellenalgebren.} Unter Mitarbeit von O. Riemenschneider.
Die Grundlehren der mathematischen Wissenschaften. Band 176. Berlin-Heidelberg-New York: Springer-Verlag (1971). 

\bibitem[GK90]{GK90} Greuel, G.-M.; Kröning, H.:
{\it Simple singularities in positive characteristic.} 
Math. Z. 203, No. 2, 339-354 (1990).

\bibitem[GLS07]{GLS07} Greuel, Gert-Martin; Lossen, Christoph; Shustin, Eugenii:
{\it Introduction to Singularities and Deformations.} 
Springer Monographs in Mathematics. Berlin: Springer (ISBN 978-3-540-28380-5/hbk). xii, 471 p. (2007). 

 \bibitem[KPR75]{KPR75} Kurke, H.; Pfister, G.; Roczen, M.:
{\it Henselsche Ringe und algebraische Geometrie. } 
Mathematische Monographien. Band 11. Berlin: VEB Deutscher Verlag der Wissenschaften. 223 S. M 57.00 (1975). 

 \bibitem[N62]{N62} Nagata, A. M.:
{\it Local Rings.} 
Interscience Tracts in Pure and Applied Mathematics 13. New York and London: Interscience Publishers, a division of John Wiley and Sons. XIII, 234 p. (1962). . 

\bibitem[P86]{P86} Popescu, D.:
{\it General N\'eron Desingularization and Approximation.} 
Nagoya Math. J. 104, 85-115 (1986). 

\bibitem[S82]{S82} Schemmel, K.-P..:
{\it Eine notwendige und hinreichende Bedingung f\"ur die Approximationseigenschaft analytischer Potenzreihenringe \"uber einem K\"orper beliebiger Charakteristik.} 
Rev. Roum. Math. Pures Appl. 27, 875-884 (1982). 

\bibitem[Th75]{Th75} Thom, René:
{\it Structural stability and morphogenesis.} An outline of a general theory of models. Translated from the French edition by D. H. Fowler. Reading, Mass. W. A. Benjamin, Inc.,  (1975).


\bibitem[Wi36]{Wi36} Witt, Ernst: {\it Theorie der quadratischen Formen in beliebigen Körpern.}
J. Reine Angew. Math. 176, 31-44 (1936). 
 
\end{thebibliography}
\end{document}